\newtheorem{theorem}{Theorem}
\theoremstyle{plain}
\newtheorem{conjecture}{Conjecture}
\newtheorem{corollary}{Corollary}
\newtheorem{proposition}{Proposition}
\numberwithin{equation}{section}
\def\ka #1{\mathscr{#1}}
\def\kal #1 #2{\mathscr{#1}^{#2}}
\DeclareMathOperator{\bp}{bp}
\DeclareMathOperator{\dist}{dist}
\DeclareMathOperator{\score}{score}
\begin{document}
	\title{Neighborly boxes and bipartite coverings; constructions and conjectures}
	
		\author{Jaros\l aw Grytczuk}
\address{Faculty of Mathematics and Information Science, Warsaw University
	of Technology, 00-662 Warsaw, Poland}
\email{jaroslaw.grytczuk@pw.edu.pl}

\author{Andrzej P. Kisielewicz}
\address{Wydzia{\l} Matematyki, Informatyki i Ekonometrii, Uniwersytet Zielonog\'orski, ul. Podg\'orna 50, 65-246 Zielona G\'ora, Poland}
\email{A.Kisielewicz@wmie.uz.zgora.pl}

\author{Krzysztof Przes\l awski}
\address{Wydzia{\l} Matematyki, Informatyki i Ekonometrii, Uniwersytet Zielonog\'orski, ul. Podg\'orna 50, 65-246 Zielona G\'ora, Poland}
\email{K.Przeslawski@wmie.uz.zgora.pl}

\begin{abstract}
Two axis-aligned boxes in $\mathbb{R}^d$ are \emph{$k$-neighborly} if their intersection has dimension at least $d-k$ and at most $d-1$. The maximum number of pairwise $k$-neighborly boxes in $\mathbb{R}^d$ is denoted by $n(k,d)$. It is known that $n(k,d)=\Theta(d^k)$, for fixed $1\leqslant k\leqslant d$, but exact formulas are known only in three cases: $k=1$, $k=d-1$, and $k=d$. In particular, the formula $n(1,d)=d+1$ is equivalent to the famous theorem of Graham and Pollak on bipartite partitions of cliques.

In this paper we are dealing with the case $k=2$. We give a new construction of $k$-neighborly \emph{codes} giving better lower bounds on $n(2,d)$. The construction is recursive in nature and uses a kind of ``algebra'' on \emph{lists} of ternary strings, which encode neighborly boxes in a familiar way. Moreover, we conjecture that our construction is optimal and gives an explicit formula for $n(2,d)$. This supposition is supported by some numerical experiments and some partial results on related open problems which are recalled.

\end{abstract}
	
	\maketitle
	
\section{Introduction}
We consider a combinatorial problem that can be stated in three different settings. The one we start with has a geometric flavor.

\subsection{Neighborly boxes in $\mathbb{R}^d$}
Consider a family of \emph{boxes} in $\mathbb{R}^d$, i.e., axis-parallel $d$-dimensional cuboids. Two boxes are said to be \emph{neighborly} if their intersection is a $(d-1)$-dimensional box. For instance, a pair of neighborly boxes in the plane is formed by two rectangles whose intersection is a non-trivial segment, while in the $3$-dimensional space, by two cuboids whose common part is a rectangle of positive area. How many pairwise neighborly boxes one may find in $\mathbb{R}^d$?

It is not hard to see that for the initial dimensions, $d=1$, $d=2$, and $d=3$, these numbers are equal respectively to $2$, $3$, and $4$. A general result asserting that the pattern continues was proved in 1985 by Zaks \cite{Zaks3}.
\begin{theorem}[Zaks, \cite{Zaks3}]\label{Theorem Zaks}
	The maximum number of pairwise neighborly boxes in $\mathbb{R}^d$ equals $d+1$.
\end{theorem}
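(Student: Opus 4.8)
The plan is to encode each box combinatorially and reduce the neighborliness condition to a statement about ternary strings, then apply a rank/linear-algebra argument in the spirit of the classical Graham--Pollak proof. Each axis-parallel box is (up to the relevant combinatorial type) a product of intervals, and what matters for neighborliness is, along each coordinate, whether two boxes' intervals coincide, are disjoint, or merely overlap. So I would assign to each box a string in $\{0,1,*\}^d$ (a \emph{code}), where the symbol in coordinate $i$ records the local configuration. Two boxes are neighborly precisely when their codes agree with the ``$*$ matches anything'' convention in all but exactly one coordinate, and in that one coordinate they carry opposite definite symbols $0$ and $1$. This is the familiar encoding alluded to in the abstract, and it turns the geometric question into: how large can a family of ternary strings be such that every pair is ``separated'' in exactly one coordinate?

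\textbf{Main steps.} First I would make the encoding precise and verify the equivalence between neighborly boxes and the string condition, so that the problem becomes bounding the size of such a code family by $d+1$. Second, to each box $B_j$ I would attach a vector or polynomial that captures its interval data, and to each ordered pair $(i,j)$ the single coordinate in which they are separated; the key is to build, for each coordinate $c \in \{1,\dots,d\}$, a pair of linear functionals (one detecting the $0$-side, one the $1$-side) so that the ``separated in exactly one coordinate'' condition forces a clean bilinear identity. Third, assuming for contradiction that there are $N$ pairwise neighborly boxes, I would set up an $N \times N$ matrix (e.g. the all-ones matrix $J$, or the identity pattern) and express it as a sum over the $d$ coordinates of rank-controlled contributions, one per coordinate, each of rank at most a fixed small constant. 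Comparing ranks then yields $N \le d + 1$.

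\textbf{The rank inequality.} The heart of the argument is the decomposition: because each pair of boxes is separated in exactly one coordinate, I can write a matrix recording the separation data as $\sum_{c=1}^{d} M_c$, where $M_c$ collects the contributions of coordinate $c$ and each $M_c$ has rank $1$ (a $0$-side times $1$-side outer product). The diagonal and off-diagonal behavior must be arranged so that the resulting matrix has full rank $N$ while its expression as a sum of $d$ rank-one matrices (plus possibly one extra term from the diagonal or a constant) bounds its rank by $d+1$. For the matching lower bound I would exhibit an explicit family of $d+1$ pairwise neighborly boxes, for instance a simplex-like staircase configuration whose codes realize all required separations.

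\textbf{Main obstacle.} The delicate point is the exactness of the ``separated in exactly one coordinate'' condition and ensuring the rank-one decomposition genuinely caps the rank at $d+1$ rather than $2d$: the $0$-side and $1$-side functionals for a single coordinate must be shown to contribute only one, not two, dimensions to the relevant span, which is exactly where the Graham--Pollak phenomenon (and the off-by-one in $d+1$ versus $d$) lives. I expect to need a careful choice of the auxiliary matrix -- symmetrizing or restricting to an antisymmetric/definite part -- so that each coordinate's contribution collapses to rank one and the diagonal contributes the extra $+1$, closing the gap between the linear-algebraic upper bound and the explicit construction.
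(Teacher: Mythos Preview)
Your proposal is sound, and the encoding step matches the paper exactly: the paper reduces pairwise neighborly boxes to ternary strings in $\{0,1,\ast\}^d$ with pairwise distance exactly one, and then observes (via the correspondence with bipartite clique partitions of $K_N$) that this is equivalent to the Graham--Pollak theorem. The paper, however, does not prove Theorem~\ref{Theorem Zaks} itself; it merely cites Zaks and Graham--Pollak, explains the equivalence, and points to Tverberg's linear-algebra proof in \emph{Proofs from the Book}.

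Your outline goes one step further than the paper by sketching the rank argument directly. That sketch is correct in spirit and is precisely Tverberg's proof: the key realization you flag in your ``main obstacle''---that each coordinate should contribute only one linear constraint, not two---is exactly what makes the bound $d+1$ rather than $2d+1$. Concretely, one imposes $\sum_i x_i = 0$ together with $\sum_{i:\,v_i=0} x_i = 0$ for each coordinate $c$ (only the $0$-side; the $1$-side sum is then automatically $-\sum_{i:\,v_i=\ast} x_i$, but what matters is that the product of the two sides vanishes). With $d+1$ homogeneous linear equations in $N$ unknowns, $N>d+1$ would yield a nonzero solution for which $0=(\sum x_i)^2 = \sum x_i^2 + 2\sum_c(\text{$0$-side})(\text{$1$-side}) = \sum x_i^2$, a contradiction. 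So your plan works; just be aware that the paper itself treats this theorem as background and does not carry out the argument.
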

The proof is based on a beautiful 1972 result of  Graham and Pollak \cite{GP} concerning bipartite partitions of complete graphs. We will explain this connection a bit later.

In \cite{Alon} Alon studied the following natural generalization of the problem of neighborly boxes. Let $1\leqslant k\leqslant d$ be a fixed integer. Two boxes in $\mathbb{R}^d$ are \emph{$k$-neighborly} if their intersection has dimension at least $d-k$ and at most $d-1$. Denote by $n(k,d)$, ($d\geqslant k$), the maximum number of pairwise $k$-neighborly boxes in $\mathbb{R}^d$. Clearly, Theorem \ref{Theorem Zaks} corresponds to the case $k=1$ and states that $n(1,d)=d+1$ for all $d\geqslant 1$.

Alon \cite{Alon} determined the asymptotic growth of the function $n(k,d)$ for all $k\geqslant 1$, namely $n(k,d)=\Theta (d^k)$, by proving the following inequalities for all $1\leqslant k\leqslant d$:
\begin{equation}
	\frac{1}{k^k}\cdot d^k\leqslant n(k,d)\leqslant \frac{2\cdot(2e)^k}{k^k}\cdot d^k.
\end{equation}
These bounds were recently improved in \cite{AlonGKP} and \cite{ChengWXY}. In particular, in \cite{AlonGKP}, the following lower bound for $n(k,d)$ was obtained:
\begin{equation}
	n(k,d)\geqslant (1-o(1))\frac{d^k}{k!}.
\end{equation}
Moreover, the following conjecture was posed in \cite{AlonGKP}.
\begin{conjecture}\label{Conjecture Limit}
	For every fixed integer $k\geqslant 1$, there exists a real number $\gamma_k$ such that
	\begin{equation}
		\lim_{d\rightarrow \infty}\frac{n(k,d)}{d^k}=\gamma_k.
	\end{equation}
\end{conjecture}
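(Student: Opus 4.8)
The plan is to first translate the geometric statement into the combinatorial language of ternary codes announced in the abstract, and then to attack the existence of the limit by Fekete-type (super-additivity) machinery, while being candid that closing the gap between $\liminf$ and $\limsup$ is the genuine difficulty --- indeed, this is presumably why the statement remains a conjecture rather than a theorem. First I would record the encoding: a family of pairwise $k$-neighborly boxes corresponds to a set $C\subseteq\{0,1,*\}^d$ such that for every pair of distinct codewords $u,v$ the number of \emph{hard conflicts}, $c(u,v):=|\{i : \{u_i,v_i\}=\{0,1\}\}|$, lies in $[1,k]$; thus $n(k,d)$ equals the maximum size of such a code, and the whole problem becomes one about ternary codes.

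For the lower direction I would establish the block-product inequality
$$ n(k,d_1+d_2)\ \geq\ n(k_1,d_1)\,n(k_2,d_2),\qquad k_1+k_2\leq k,\ k_1,k_2\geq 1, $$
obtained by placing an optimal $k_1$-code on the first $d_1$ coordinates, an optimal $k_2$-code on the last $d_2$, and taking all concatenations: for two genuine product words the hard-conflict count is the sum of the two block counts, hence lies in $[1,k_1+k_2]\subseteq[1,k]$, while if the words agree on one block it lies in $[1,k_i]$. Iterating with $k_1=\cdots=k_k=1$ and $d_i\approx d/k$ already recovers the order $d^k$, and in general this shows $\underline{\gamma}_k:=\liminf_d n(k,d)/d^k$ is strictly positive and obeys the self-similar bounds coming from all admissible splits.

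The hard part is the upper direction. Convergence does not follow from super-multiplicativity alone: because the normalization is the polynomial $d^k$ rather than an exponential, the logarithmic Fekete lemma only yields $(\log n(k,d))/d\to 0$, which is useless here. What is missing is a matching \emph{sub-additive} estimate forcing $\overline{\gamma}_k:=\limsup_d n(k,d)/d^k$ to equal $\underline{\gamma}_k$. I would attempt this by a projection/averaging argument: starting from an optimal code in dimension $d$, delete coordinates in many ways and analyze how the constraint $c(u,v)\in[1,k]$ descends, hoping to produce an inequality that, after normalization by $d^k$, is sub-additive and thus (with the lower bound above) pins the limit via a Fekete-type lemma adapted to polynomial growth. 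Equivalently, one could try to show that $\sum_d n(k,d)x^d$ is rational with a single pole of order $k+1$ at $x=1$, whose leading coefficient would then be $\gamma_k\,k!$.

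I expect the averaging/projection step to be the main obstacle: controlling how optimal $k$-neighborly codes behave under coordinate deletion, and thereby ruling out sub-polynomial oscillation of $n(k,d)/d^k$, is exactly the phenomenon that the exact-formula conjecture for $k=2$ --- the subject of this paper --- is designed to capture in one special case. A full resolution for general $k$ would presumably require either such exact formulas or a new smoothing principle for these codes, and it is this gap that keeps Conjecture~\ref{Conjecture Limit} open.
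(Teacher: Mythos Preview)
The statement you were asked to prove is labeled \emph{Conjecture} in the paper, and the paper does not prove it: immediately after stating it the authors write that ``for every $k\geqslant 2$ the conjecture is widely open.'' There is therefore no proof in the paper to compare your attempt against. Your proposal is candid about this --- you say explicitly that closing the gap between $\liminf$ and $\limsup$ is ``the genuine difficulty'' and that this is ``exactly \ldots\ what keeps Conjecture~\ref{Conjecture Limit} open.'' So what you have submitted is not a proof but a discussion of approaches and obstructions, and it should be assessed as such.

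On substance: the ternary encoding and the block-product inequality $n(k,d_1+d_2)\geqslant n(k_1,d_1)\,n(k_2,d_2)$ for $k_1+k_2\leqslant k$ are correct and match the product constructions the paper refers to (Alon's construction and its refinements). Your observation that logarithmic Fekete is useless under polynomial normalization is also correct and is the right diagnosis of why super-multiplicativity alone does not yield the limit. However, the ``projection/averaging'' step you propose for the upper direction has no content as written: deleting coordinates from a $k$-neighborly code can violate the lower constraint $c(u,v)\geqslant 1$ (distinct codewords may collapse or become distance-zero), so the projected family is not in general $k$-neighborly, and you offer no mechanism to control or repair this. Likewise, the remark that the generating function ``should'' be rational with a pole of order $k+1$ at $x=1$ is a reformulation of the conclusion, not a route to it. In short, you have correctly located where the problem lies but have not advanced beyond that point --- which is consistent with the paper, since the paper does not claim to resolve this conjecture either.
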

By Theorem \ref{Theorem Zaks}, $\gamma_1=1$, but for every $k\geqslant 2$ the conjecture is widely open. It is however tempting to guess that perhaps $\gamma_k=\frac{1}{k!}$ for every $k\geqslant 1$. Indeed, in \cite{AlonGKP} we made another supposition, which (if true) would imply this guess.

Let us expand the definition of $n(k,d)$ by adopting the convention that $n(0,d)=1$ for all $d\geqslant 1$.
\begin{conjecture}\label{Conjecture Pascal}
	For every $1\leqslant k\leqslant d$,
	\begin{equation}
		n(k,d)\leqslant n(k-1,d-1)+n(k,d-1).
	\end{equation}
\end{conjecture}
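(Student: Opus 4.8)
The plan is to work throughout with the standard ternary encoding of boxes. Recall that a family of pairwise $k$-neighborly boxes in $\mathbb{R}^d$ corresponds to a \emph{code} $C\subseteq\{0,1,*\}^d$ in which every pair of distinct words $a,b$ satisfies $1\leqslant\delta(a,b)\leqslant k$, where $\delta(a,b)$ counts the coordinates $i$ with $\{a_i,b_i\}=\{0,1\}$ (the symbol $*$ never contributes to $\delta$), and $n(k,d)$ is the maximal size of such a code. I would fix an optimal code $C$ with $|C|=n(k,d)$ and split it according to the value of the last coordinate into the classes $C_0$, $C_1$, $C_*$ of words ending in $0$, $1$, $*$ respectively. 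The aim is to charge one part of $C$ to a $(d-1)$-dimensional $k$-neighborly code and the rest to a $(d-1)$-dimensional $(k-1)$-neighborly code, matching the two terms on the right.

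First I would settle the ``easy'' half. Let $\phi\colon\{0,1,*\}^d\to\{0,1,*\}^{d-1}$ delete the last coordinate. On $C_*\cup C_0$ no pair conflicts in the last coordinate, since each of the patterns $*\!-\!*$, $*\!-\!0$, $0\!-\!0$ contributes $0$ to $\delta$; hence $\delta(\phi(a),\phi(b))=\delta(a,b)\in[1,k]$ for all such pairs. Moreover $\phi$ is injective on $C_*\cup C_0$, since two words with equal image would share the first $d-1$ coordinates and carry distinct last symbols from $\{*,0\}$, forcing $\delta=0$ and contradicting $\delta\geqslant 1$. Thus $\phi(C_*\cup C_0)$ is a $k$-neighborly code in dimension $d-1$, giving
\[
|C_*|+|C_0|\leqslant n(k,d-1),
\]
and symmetrically $|C_*|+|C_1|\leqslant n(k,d-1)$. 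Combining either bound with $|C|=|C_*|+|C_0|+|C_1|$ yields $|C|\leqslant n(k,d-1)+\min(|C_0|,|C_1|)$, so the whole statement reduces to the single inequality $\min(|C_0|,|C_1|)\leqslant n(k-1,d-1)$.

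This last inequality is exactly where I expect the difficulty to lie. The tempting move is to project the smaller class, say $C_1$, by $\phi$ and hope it is $(k-1)$-neighborly; but this fails, because a pair $a,b\in C_1$ with $\delta(a,b)=k$ (both ending in $1$, hence agreeing in the last coordinate) projects to a pair with $\delta(\phi(a),\phi(b))=k$, which is permitted in a $k$-neighborly code but forbidden in a $(k-1)$-neighborly one. No class-by-class projection can therefore manufacture the required ``downgrade'' from $k$ to $k-1$; what must instead be exploited is the \emph{cross} constraint between the committed classes, namely that every pair $a\in C_0$, $b\in C_1$ satisfies $\delta(\phi(a),\phi(b))\leqslant k-1$ because the last coordinate already supplies one conflict. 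Converting this mutual restriction into an absolute bound on $\min(|C_0|,|C_1|)$ appears to demand a global argument rather than a combinatorial projection, most plausibly a rank or eigenvalue estimate in the spirit of the Graham--Pollak method that already governs the case $k=1$ (where the recurrence holds with equality, $n(1,d)=n(0,d-1)+n(1,d-1)=1+d$). Making such a linear-algebraic step work uniformly in $k$ is the crux of the matter, and its resistance is precisely why the inequality remains only conjectural.
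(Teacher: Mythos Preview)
Your write-up is not a proof and you do not pretend otherwise: you reduce the inequality to the claim $\min(|C_0|,|C_1|)\leqslant n(k-1,d-1)$ and then explicitly leave that claim open. This is the right assessment of the situation. The statement is labelled a \emph{Conjecture} in the paper, and the paper does not supply a proof either; it merely records that the inequality is known in the three cases $k=1$, $k=d-1$, $k=d$ (where exact formulas for $n(k,d)$ are available) and that it would imply $\gamma_k=1/k!$. So there is no ``paper's own proof'' to compare against.

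As for the partial work you did: the reduction is sound. The projection $\phi$ is injective and distance-preserving on $C_*\cup C_0$ for exactly the reason you give, so $|C_*|+|C_0|\leqslant n(k,d-1)$ and symmetrically $|C_*|+|C_1|\leqslant n(k,d-1)$; subtracting from $|C|=|C_*|+|C_0|+|C_1|$ indeed leaves only $\min(|C_0|,|C_1|)\leqslant n(k-1,d-1)$ to be shown. Your observation that projecting $C_0$ or $C_1$ alone cannot work (because pairs at distance exactly $k$ survive) and that the genuine information lies in the \emph{cross} constraint $\delta(\phi(a),\phi(b))\leqslant k-1$ for $a\in C_0$, $b\in C_1$ is also correct. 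Turning that bipartite constraint into an absolute bound on the smaller side is precisely the missing step, and as you say, no one currently knows how to do it in general.
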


For instance, for $k=2$ this would give $n(2,d)\leqslant d^2/2+O(d)$ implying that $\gamma_2=1/2$. In general, by induction and the well known formula for the sum $1^k+2^k+\cdots +d^k$, one easily gets $n(k,d)\leqslant d^k/k!+O(d^{k-1})$, which shows that Conjecture \ref{Conjecture Pascal} implies Conjecture \ref{Conjecture Limit} with $\gamma_k=1/k!$.

Let us mention that exact formulas for $n(k,d)$ are known only in three cases, namely, $n(1,d)=d+1$ (Zaks' theorem), $n(d,d)=2^d$ (trivial), and $n(d-1,d)=3\cdot 2^{d-2}$ (obtained recently in \cite{AlonGKP}). This shows that Conjecture \ref{Conjecture Pascal} is true for the three corresponding cases, $k=1$, $k=d$, and $k=d-1$.

In the present paper we will give a new construction of families of pairwise $2$-neighborly boxes improving the recent lower bound from \cite{AlonGKP}. Furthermore, we conjecture that this construction is optimal and gives an exact formula for $n(2,d)$.

\subsection{Binary strings with jokers}
The problem of neighborly boxes in $\mathbb{R}^d$ can be encoded in a purely combinatorial setting using strings over alphabet with just three symbols. Let $J=[0,1]$ be the unit segment and $J_0=[0,1/2]$ and $J_1=[1/2,1]$, its left and right half, respectively. A \emph{normalized} box is a $d$-dimensional cuboid of the form $A=A_1\times\cdots \times A_d$, where $A_i\in \{J_0,J_1,J\}$ for all $i=1,2,\ldots, d$.

Suppose that we are given two normalized boxes, $A=A_1\times\cdots \times A_d$ and $B=B_1\times\cdots \times B_d$. If for some fixed coordinate $i$, we have $\{A_i,B_i\}=\{J_0,J_1\}$, then we say that $A$ and $B$ \emph{pass} each other in dimension $i$. Otherwise, we say that $A$ and $B$ \emph{overlap} in dimension $i$. Clearly, the intersection $A\cap B$ is a cuboid whose dimension equals exactly the number of dimensions in which $A$ and $B$ overlap. For instance, if $A$ and $B$ overlap in exactly $d-1$ dimensions, or the same, if they pass each other in exactly one dimension, then $A$ and $B$ are neighborly. In general, two normalized boxes are $k$-neighborly if and only if they pass in at least one and at most $k$ dimensions.

It is not hard to imagine that any family of boxes in $\mathbb{R}^d$ can be transformed to a family of normalized boxes preserving dimensions of all intersecting pairs. Therefore in investigating the function $n(k,d)$ one may only restrict to normalized boxes.

To further simplify the setting, let $S=\{0,1,\ast\}$ be an alphabet consisting of two binary digits and one special symbol called \emph{joker}. Let $S^d$ be the set of all strings of length $d$ over $S$. Clearly, a normalized box $A=A_1\times\cdots \times A_d$ can be identified with a string $u=u_1u_2\cdots u_d$ so that $u_i=0$ if $A_i=J_0$, $u_i=1$ if $A_i=J_1$, and $u_i=\ast$ if $A_i=J$.  

\begin{figure}[ht]
	\begin{center}
		
		\resizebox{14cm}{!}{			
			\includegraphics{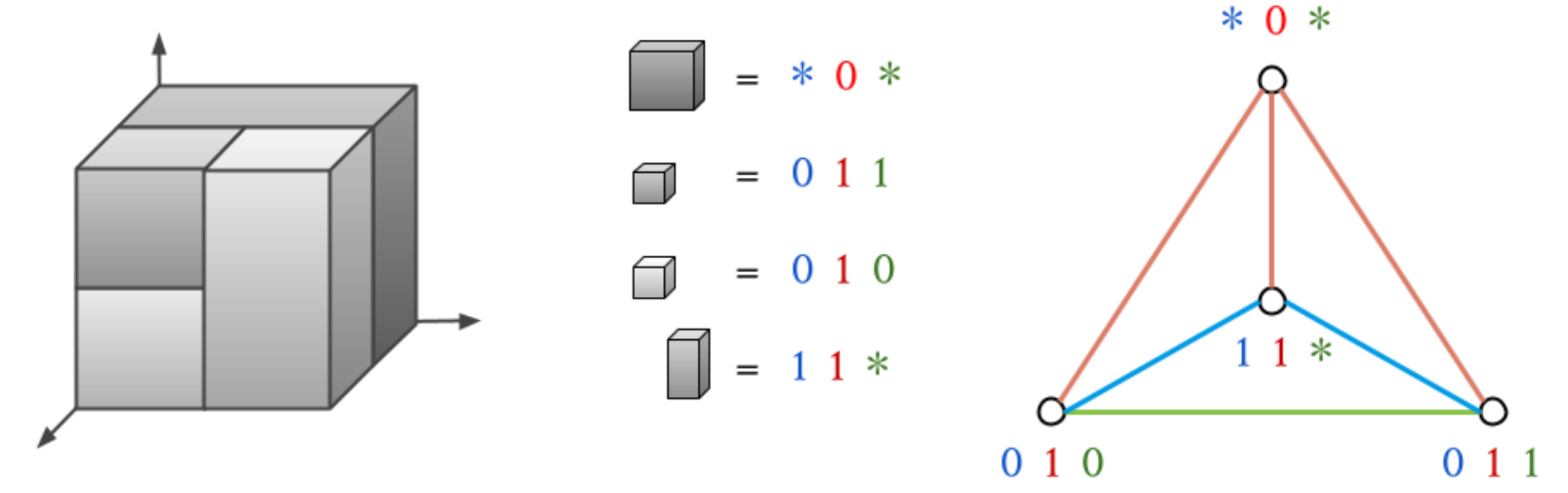}}
		
				\caption{Four pairwise neighborly boxes in $\mathbb{R}^3$, the corresponding neighborly code, and a bipartite clique partition.}
		\label{Boxes Strings Clique}
	\end{center}
\end{figure}

The \emph{distance} between two strings $u,v\in S^d$ is defined as the number of positions where they differ, but none of them is occupied by a joker. It is denoted by $\dist(u,v)$. More formally, if $u=u_1\cdots u_d$ and $v=v_1\cdots v_d$, then
$$
\dist(u,v)=|\{1\leqslant i\leqslant d: u_i\neq v_i \text{ and }u_i,v_i\in \{0,1\}\}|.
$$
Notice that the distance may be zero even if the two strings are not the same. For instance, if $u=0\ast1\ast$ and $v=\ast 1\ast 0$, then $\dist(u,v)=0$.

Clearly, $\dist(u,v)$ is exactly the number of dimensions in which the two corresponding boxes pass. Thus, by the above discussion, $n(k,d)$ is just the maximum number of strings in $S^d$ such that every two of them satisfy $1\leqslant \dist(u,v)\leqslant k$. In particular, Theorem \ref{Theorem Zaks} can be formulated in the following equivalent form.

\begin{theorem}[Zaks, \cite{Zaks3}]\label{Theorem Zaks Strings}
	The maximum number of strings in $S^d$, each two of them is at distance one, equals $d+1$.
\end{theorem}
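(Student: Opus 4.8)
The plan is to prove the string version of Zaks' theorem by establishing matching bounds: a construction giving $d+1$ strings, and an upper bound of $d+1$ obtained by reducing the distance-one condition to the Graham--Pollak theorem on bipartite partitions of complete graphs, which I would cite as a black box.

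For the lower bound I would exhibit an explicit family of $d+1$ strings in $S^d$ that are pairwise at distance one. A convenient choice is $w_0 = 0^d$ together with $w_i = 0^{i-1}\,1\,\ast^{d-i}$ for $i = 1, \ldots, d$. A direct check shows that $\dist(w_0, w_i) = 1$, since $w_0$ and $w_i$ have binary entries differing only in position $i$; and for $i < j$ the strings $w_i$ and $w_j$ agree (both $0$) in positions $1,\ldots,i-1$, differ in position $i$ (where $w_i$ has $1$ and $w_j$ has $0$), and elsewhere at least one of them carries a joker, so $\dist(w_i, w_j) = 1$ as well. Thus every pair is at distance exactly one, and $n(1,d) \geqslant d+1$.

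For the upper bound, let $U \subseteq S^d$ be any family of $n$ strings that are pairwise at distance exactly one; the goal is $n \leqslant d+1$. The key step is to encode $U$ as a partition of the complete graph $K_n$ on the vertex set $U$. For each coordinate $i \in \{1, \ldots, d\}$, set $X_i = \{u \in U : u_i = 0\}$ and $Y_i = \{u \in U : u_i = 1\}$, and let $B_i$ be the complete bipartite graph with parts $X_i$ and $Y_i$. A pair $\{u,v\}$ is an edge of $B_i$ precisely when $\{u_i, v_i\} = \{0,1\}$, i.e., when $u$ and $v$ pass each other in dimension $i$. Because $\dist(u,v) = 1$ for every pair, each edge of $K_n$ lies in exactly one of the graphs $B_1, \ldots, B_d$, so the nonempty $B_i$ form a partition of $E(K_n)$ into at most $d$ complete bipartite graphs. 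By the Graham--Pollak theorem, such a partition needs at least $n-1$ parts, whence $d \geqslant n-1$ and $n \leqslant d+1$, matching the construction.

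The substantive external input is the Graham--Pollak inequality itself, which I would not reprove. The step that deserves genuine care is the verification that the distance-one hypothesis produces an \emph{exact} edge partition of $K_n$ rather than a mere cover: it is essential that every pair passes in exactly one dimension (so each edge lies in exactly one $B_i$), which is what $\dist = 1$ guarantees and what $\dist \leqslant k$ for $k \geqslant 2$ would not. This "exactly once" property is precisely the hypothesis needed to apply Graham--Pollak, and it is the feature that makes the case $k=1$ special among all $n(k,d)$.
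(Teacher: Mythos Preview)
Your proof is correct and follows essentially the same route as the paper: the paper establishes Theorem~\ref{Theorem Zaks Strings} by exhibiting the bijection between $1$-neighborly codes in $S^d$ and bipartite edge-partitions of $K_n$ with at most $d$ parts, and then invoking Graham--Pollak as a black box, which is exactly your upper-bound argument. Your explicit lower-bound construction is a welcome addition that the paper leaves implicit (relying instead on the trivial direction of Graham--Pollak via the equivalence), but the substance of the argument is the same.
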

 
 Let us call any family of strings in $S^d$ a \emph{$k$-neighborly code} if the restriction $1\leqslant \dist(u,v)\leqslant k$ holds for each pair of strings in the family. So, $n(k,d)$ is the maximum size of a $k$-neighborly code in $S^d$.
 
\subsection{Bipartite coverings of graphs}
Recall that a \emph{complete graph} (or a \emph{clique}) is a graph in which every pair of vertices is connected by an edge. A clique on $n$ vertices is denoted by $K_n$. A \emph{complete bipartite graph} (or a \emph{bipartite clique}) is a graph whose set of vertices can be split into two non-empty subsets, $X$ and $Y$, with no edges inside $X$ or $Y$, and all possible edges between $X$ and $Y$. A bipartite clique on sets of size $m$ and $n$ is denoted by $K_{m,n}$.

In 1972 Graham and Pollak \cite{GP} proved the following theorem.

\begin{theorem}[Graham and Pollak, \cite{GP}]\label{Theorem Graham-Pollak}
	The minimum number of complete bipartite graphs needed to partition the edges of a complete graph on $n$ vertices is $n-1$.
\end{theorem}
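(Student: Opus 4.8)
The plan is to prove the two matching bounds separately. For the upper bound, I would exhibit an explicit partition of the edges of $K_n$ into $n-1$ bipartite cliques. The natural candidate is the family of stars: for each $i=1,\ldots,n-1$, let $B_i$ be the star joining vertex $i$ to every vertex $j$ with $j>i$. Each $B_i$ is a complete bipartite graph $K_{1,n-i}$, and every edge $\{i,j\}$ with $i<j$ lies in exactly one $B_i$, namely the one indexed by its smaller endpoint. This shows that $n-1$ bipartite cliques always suffice.

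For the lower bound, which is the substantive part, I would use the linear-algebra (``polynomial'') method. Suppose the edges of $K_n$ are partitioned into bipartite cliques $B_1,\ldots,B_m$, where $B_\ell$ has vertex classes $X_\ell,Y_\ell\subseteq\{1,\ldots,n\}$. Introduce indeterminates $x_1,\ldots,x_n$, one per vertex, and record the identity
\[
\sum_{1\leqslant i<j\leqslant n} x_i x_j \;=\; \sum_{\ell=1}^{m}\Bigl(\sum_{i\in X_\ell} x_i\Bigr)\Bigl(\sum_{j\in Y_\ell} x_j\Bigr),
\]
which holds because the partition places each edge $\{i,j\}$ in precisely one $B_\ell$, contributing the monomial $x_i x_j$ exactly once.

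Then I would argue by contradiction: assume $m\leqslant n-2$. Consider the homogeneous linear system consisting of the $m$ equations $\sum_{i\in X_\ell} x_i=0$ together with the extra equation $\sum_{i=1}^{n} x_i=0$. This is a system of at most $n-1$ equations in $n$ unknowns, so it admits a nonzero real solution. Substituting this solution into the identity forces every factor $\sum_{i\in X_\ell} x_i$ to vanish, so the right-hand side is $0$, while the left-hand side equals $\tfrac12\bigl[(\sum_i x_i)^2-\sum_i x_i^2\bigr]=-\tfrac12\sum_i x_i^2$. Hence $\sum_i x_i^2=0$, forcing the solution to be zero, a contradiction. Therefore $m\geqslant n-1$.

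The conceptual heart of the argument, and the step I expect to be the main obstacle, is the lower bound, since the construction is routine. The delicate point is recognizing that the combinatorial problem should be linearized by attaching a variable to each vertex and exploiting the quadratic identity above; the final contradiction hinges on the positive-definiteness of $\sum_i x_i^2$, which is precisely what makes the real (rather than merely combinatorial) argument succeed. I would take particular care with the counting step that establishes the identity, verifying that each edge is accounted for exactly once, since the entire argument rests on that bookkeeping.
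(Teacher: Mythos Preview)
Your proof is correct and is precisely Tverberg's linear-algebra argument, which is the proof the paper cites (via \cite{Tverberg} and \cite{Aigner Ziegler}) rather than reproduces. Since the paper treats Theorem~\ref{Theorem Graham-Pollak} as a quoted classical result and gives no proof of its own, your proposal matches the intended approach.
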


A beautiful proof of this result based on simple linear algebra was found by Tverberg \cite{Tverberg}. It is included in the famous ``Proofs from the Book'' by Aigner and Ziegler \cite{Aigner Ziegler}. Curiously, all know proofs of this result are more or less ``algebraic'' (cf. \cite{GP}, \cite{Peck}, \cite{Vishwanathan1}, \cite{Vishwnathan2}) and finding a purely combinatorial argument remains quite a challenge. There exist various generalizations of the Graham-Pollak theorem for which many challenging problems remain open (cf. \cite{Alon 2}, \cite{Cioaba}, \cite{van Lint}).

To see the connection with strings suppose that $B_1,\ldots, B_d$ is a family of bipartite cliques whose edges partition the set of edges of a clique $K_n$. For every vertex $u$ of the clique, assign a string $u=u_1\cdots u_d$ in $S^d$ defined as follows. If $X_i$ and $Y_i$ are the two bipartition subsets of $B_i$, then put $u_i=0$ if $u \in X_i$, $u_i=1$ if $u\in Y_i$, and $u_i=*$ if $u\notin X_i\cup Y_i$. Each pair of these strings is at distance one. Indeed, the edge joining two vertices $u$ and $v$ must belong to exactly one bipartite clique, say $B_j$. Then the corresponding strings $u_1\cdots u_d$ and $v_1\cdots v_d$ satisfy $\{u_j,v_j\}=\{0,1\}$ and this happens only at position $j$. Vice versa, having $1$-neighborly code of size $n$ in $S^d$ we may reverse the above process and assign to each of the $d$ dimensions the unique bipartite clique, thereby obtaining a desired partition of $K_n$. It follows that Theorems \ref{Theorem Zaks Strings} and \ref{Theorem Graham-Pollak}, are indeed equivalent. (See Figure \ref{Boxes Strings Clique} for an example of pairwise neighborly boxes encoded as strings and a bipartite clique partition of a complete graph.) 

Analogous argument works for arbitrary $k$-neighborly codes which correspond in the same way to bipartite clique \emph{$k$-coverings} of $K_n$, where each edge belongs to at least one and at most $k$ bipartite cliques of the covering.

Let us denote by $\bp_k(G)$ the least number of bipartite cliques needed in such $k$-covering of a graph $G$. For instance, by Theorem \ref{Theorem Graham-Pollak}, we have $\bp_1(K_n)=n-1$. By the above discussion we get the following statement formulated by Alon in \cite{Alon}.
\begin{proposition}[Alon, \cite{Alon}]\label{Proposition Alon}
	The number $n(k,d)$ is the largest positive integer $N$ such that $\bp_k(K_N)\leqslant d$.
\end{proposition}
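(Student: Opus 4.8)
The plan is to make precise the dictionary between $k$-neighborly codes and bipartite clique $k$-coverings that was sketched in the paragraph preceding the proposition, and then read off the extremal statement. Concretely, I would prove the single equivalence
\begin{equation*}
\bp_k(K_N)\leqslant d \quad\Longleftrightarrow\quad \text{there is a $k$-neighborly code of size $N$ in $S^d$,}
\end{equation*}
from which the proposition follows immediately by a short monotonicity argument.

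For the direction ($\Leftarrow$) I would start from a $k$-neighborly code $\{u^{(1)},\dots,u^{(N)}\}$ in $S^d$, identify the vertices of $K_N$ with the indices $1,\dots,N$, and for each coordinate $i$ set $X_i=\{j:u^{(j)}_i=0\}$ and $Y_i=\{j:u^{(j)}_i=1\}$. Whenever both parts are non-empty this is a genuine bipartite clique $B_i$, and by construction the edge $\{j,j'\}$ lies in $B_i$ exactly when the two strings pass in coordinate $i$. Hence the number of the $B_i$ covering a fixed edge equals $\dist(u^{(j)},u^{(j')})$, which lies in the range $[1,k]$ by the code condition; so the (at most $d$) non-trivial $B_i$ form a $k$-covering of $K_N$, giving $\bp_k(K_N)\leqslant d$. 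The only point needing care here is discarding coordinates with an empty part, which is harmless since such coordinates cover no edges.

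For ($\Rightarrow$) I would reverse the construction: given a $k$-covering of $K_N$ by bipartite cliques $B_1,\dots,B_m$ with $m\leqslant d$, assign to each vertex $u$ the string whose $i$-th symbol records its membership ($0$, $1$, or $\ast$) in the bipartition of $B_i$, padding the remaining $d-m$ coordinates with jokers. The covering property forces $1\leqslant\dist\leqslant k$ for every pair, so the strings form a $k$-neighborly code; moreover distinct vertices yield distinct strings, because any two vertices are joined by an edge, hence pass in at least one coordinate and are therefore at distance at least one. This produces a code of size exactly $N$ in $S^d$.

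Finally I would package the equivalence. Since every $k$-neighborly code of size $N$ contains one of each smaller size, and since a $k$-covering of $K_{N+1}$ restricts (after dropping trivial pieces) to one of $K_N$, both sides of the displayed equivalence are downward closed in $N$. Thus $\{N:\bp_k(K_N)\leqslant d\}=\{1,\dots,n(k,d)\}$, whose maximum is $n(k,d)$, as claimed. I do not expect a genuine obstacle here: the whole argument is a careful bookkeeping of the correspondence, and the only subtleties---non-emptiness of the bipartition parts and distinctness of codewords---are both resolved by the lower bound $\dist\geqslant 1$ that is built into the definitions of a $k$-neighborly code and of a $k$-covering.
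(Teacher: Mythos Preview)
Your proposal is correct and is precisely the argument the paper intends: the paper does not give a formal proof of this proposition but simply says it follows from ``the above discussion,'' namely the dictionary between $k$-neighborly codes and bipartite $k$-coverings that you have carefully spelled out. Your added care about empty bipartition parts, distinctness of codewords, and downward closure in $N$ fills in exactly the bookkeeping the paper leaves implicit.
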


\subsection{The main result}
In the present paper we focus entirely on the case $k=2$. We give a new construction of $2$-neighborly codes for all $d\geqslant 4$. This leads to a slight improvement of the lower bound for $n(2,d)$ obtained in \cite{AlonGKP}. Moreover, we suspect that this construction is optimal and gives a complete information on the function $n(2,d)$.

To state the result stemming from our construction, let us denote by $a(n)$, ($n\geqslant 1$), the $n$-th term of the following sequence of numbers:
\begin{equation*}
	2,\mathbf{3},\mathbf{3},4,5,\mathbf{6},\mathbf{6},7,8,9,10,11,\mathbf{12},\mathbf{12},13,14,15,16,17,18,19,20,21,22,23,\mathbf{24},\mathbf{24},25,....
\end{equation*}
The sequence can be described as a non-decreasing list of all positive integers, with the number $1$ missing, in which every number of the form $3\cdot 2^r$, ($r\geqslant 0$), is repeated twice.

Let us denote for convenience $b(d)=4+a(1)+\cdots+a(d-2)$, for $d\geqslant 3$, and $b(2)=4$. Our main result reads as follows.
\begin{theorem}\label{Theorem Main a(n)}
	For every $d\geqslant 2$, we have
	\begin{equation}
		n(2,d)\geqslant b(d).
	\end{equation}
\end{theorem}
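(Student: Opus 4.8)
The plan is to prove the bound by an explicit recursive construction of $2$-neighborly codes, proceeding by induction on $d$. Since $n(2,d)$ is the maximum size of a family of strings in $S^d$ with all pairwise distances in $\{1,2\}$, it suffices to exhibit, for each $d\geqslant 2$, a $2$-neighborly code of size $b(d)$. For the base case $d=2$ I would take the four binary strings $00,01,10,11$, whose pairwise distances are all equal to $1$ or $2$, so this is a $2$-neighborly code of size $4=b(2)$. A code of size $b(3)=6$ in $S^3$ can moreover be produced directly, matching the known value $n(2,3)=n(d-1,d)=3\cdot2^{d-2}$, and it is natural to keep it in mind as a second anchor for the recursion.

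The inductive step is where the real content lies: from a $2$-neighborly code $C\subset S^{d-1}$ of size $b(d-1)$ I would build a code $C'\subset S^{d}$ of size $b(d-1)+a(d-2)=b(d)$. I would first embed $C$ into $S^{d}$ by appending one new coordinate, assigning to each old string $u$ a symbol $\epsilon(u)\in\{0,1,\ast\}$; this changes the distance of a pair $u,v\in C$ by $1$ precisely when $\{\epsilon(u),\epsilon(v)\}=\{0,1\}$, so to stay inside $\{1,2\}$ the assignment must avoid separating, in the new coordinate, any two old strings already at distance $2$. I would then adjoin $a(d-2)$ \emph{new} strings that use the new coordinate to pass one another and are placed in the old coordinates so as to pass every old string in exactly one or two dimensions. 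Organizing both the symbol assignment $\epsilon$ and the new block is exactly what the ``algebra on lists'' is for: the code is carried as a structured list of ternary strings, and a single list operation $\ostar$ performs the padding of $C$ and the insertion of the new block at once, with a controlled effect on the size.

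The main obstacle, and the heart of the verification, is checking $1\leqslant\dist(x,y)\leqslant 2$ for all three types of pairs in $C'$: old/old (handled by the constraint on $\epsilon$), new/new, and especially old/new, whose count is largest and which must be controlled uniformly over all of $C$. I expect this to force a precise, non-ad-hoc description of the new block, supported by an invariant maintained by the list algebra—for instance a bound on how many old strings can sit at distance $2$ from a prospective new string. The most delicate point is to see why the attainable increment is exactly $a(d-2)$, in particular why the values $3\cdot2^{r}$ occur twice, i.e. why the per-coordinate increment, which usually grows by one at each step, stalls for exactly one extra step. I would expect this to reflect a genuine packing bottleneck tied to the extremal $K_{d-1,d}$-type configuration behind $n(d-1,d)=3\cdot2^{d-2}$: at those sizes a single new coordinate cannot absorb an extra new string beyond the previous batch without forcing some old/new distance above $2$. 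Making this quantitative—proving that $\ostar$ can always be applied and yields precisely $a(d-2)$ new strings at each stage—is the step I expect to demand the most care.
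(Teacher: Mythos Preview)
Your proposal is a plan, not a proof: the central object---the operation $\ostar$ that is supposed to take a $2$-neighborly code in $S^{d-1}$ of size $b(d-1)$ and output one in $S^d$ of size $b(d)$---is never defined, and you explicitly flag the verification that it adds exactly $a(d-2)$ new strings as the step ``I expect to demand the most care.'' Until that step is carried out there is nothing to check; the base case $d=2$ is fine, but the inductive step is a promise, not an argument.

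More importantly, the paper does \emph{not} follow the one-coordinate-at-a-time scheme you sketch. Its recursion is a doubling, not a $+1$. The construction carries structured objects called \emph{nice triples} $T=(A,B,C)$ (with $A\ominus B$ a $2$-neighborly code and $C$ a $1$-neighborly sublist of $B$), and defines a compound $T\otimes T'$ of two concordant triples with
\[
\delta(T\otimes T')=\delta(T)+\delta(T')-\alpha(T)+2,\qquad n(T\otimes T')=n(T)+n(T')+g(T)g(T').
\]
Starting from \emph{four} base triples $T^\flat,T^\dag,T^\ddag,T^\sharp$ extracted from optimal codes in dimensions $4,5,6,7$, one iterates $\ka T_{k+1}=\ka T_k\otimes\ka T_k$; the images $\delta(\ka T_k)$ form intervals $I_k=[3\cdot2^k+k+1,\,6\cdot2^k+k+1]$ that partition $\{4,5,6,\ldots\}$, and a closed formula shows $n(T)=b(d)$ whenever $\delta(T)=d$. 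The ``stall'' of $a(\cdot)$ at the values $3\cdot 2^r$ that you try to attribute to a packing bottleneck in a single new coordinate is, in the paper, simply the seam between consecutive intervals $I_k$ and $I_{k+1}$ in this doubling scheme---it falls out of the explicit formula for $n(T)$, not from any local obstruction in a $+1$ step. So even if a $d-1\to d$ construction exists, your outline does not supply it, and the paper's argument proceeds along a different axis altogether.
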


As mentioned above, we conjecture that the lower bound from Theorem \ref{Theorem Main a(n)} is optimal.

\begin{conjecture}\label{Conjecture n(2,d)=b(d)} For each $d\geqslant 2$, $n(2,d)=b(d)$.
\end{conjecture}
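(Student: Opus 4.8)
The plan is to prove the bound constructively, by exhibiting for every $d$ a list $L_d$ of strings in $S^d$ with $|L_d|=b(d)$ and all pairwise distances in $\{1,2\}$, i.e.\ a $2$-neighborly code of size $b(d)$. Since $b(d)-b(d-1)=a(d-2)$, the natural scheme is an induction on $d$ in which passing from dimension $d-1$ to dimension $d$ appends one new coordinate and enlarges the code by exactly $a(d-2)$ strings, while preserving the defining inequality $1\leqslant \dist(u,v)\leqslant 2$.

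The main structural observation I would exploit is the effect of appending a coordinate. If $L$ is a list of strings of length $d-1$ and we form length-$d$ strings by assigning each string a symbol in $\{0,1,\ast\}$ in the new coordinate, then for a pair $u,v$ the distance increases by exactly $1$ when $u,v$ receive the two different binary symbols (they now \emph{pass} in the new coordinate) and is unchanged otherwise. Hence a new coordinate is, combinatorially, a choice of a partial bipartition $(X,Y)$ of the strings, and the only hard constraint it imposes is that no pair already at distance $2$ may be split across $X$ and $Y$, for that would create distance $3$. This motivates carrying through the induction not only the code itself but also the graph $G$ of its \emph{tight} pairs, those at distance exactly $2$, since the admissible new coordinates are precisely the partial bipartitions that do not separate any edge of $G$, while each distance-$1$ pair that is split becomes a new edge of $G$ at the next level.

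With this framework in place, the steps I would carry out are: first, set up the ``list algebra'', a small repertoire of operations (appending a fixed symbol, splitting a list across a new coordinate, and a ``doubling/merge'' operation that fuses two sublists together) and record, for each operation, how it transforms the size and the tight-pair graph $G$. Second, define $L_d$ by an explicit recursion in this algebra and verify that the invariant $1\leqslant \dist(u,v)\leqslant 2$ is maintained, the crucial point being that every distance-$0$ pair created at an intermediate stage (allowed, since restricting a valid code to a prefix may produce coincidences) is eventually separated by a later coordinate without ever separating a tight pair. Third, track the sizes: show that the number of strings added at step $d$ is exactly $a(d-2)$, so that $|L_d|=4+a(1)+\cdots+a(d-2)=b(d)$.

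The hard part, I expect, will be simultaneously honoring the two opposing constraints during the inductive step: adding as many new strings as possible (to reach the increment $a(d-2)$) while guaranteeing that each new pairwise distance lands in $\{1,2\}$ and that no future coordinate is forced to split a tight pair. In particular, I anticipate the doubling of the values $3\cdot 2^{r}$ in the sequence $a(n)$ to be the subtle combinatorial heart: it should reflect a component of the tight-pair graph $G$ whose size doubles at these stages (plausibly mirroring the extremal configurations behind $n(d-1,d)=3\cdot 2^{d-2}$), allowing the same increment to be realized for two consecutive dimensions. Verifying globally that the newly introduced strings are mutually $2$-neighborly and correctly $2$-neighborly against all older strings, rather than merely against a chosen few, is where the bookkeeping is most error-prone and where the list-algebra formalism should earn its keep.
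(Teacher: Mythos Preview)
The statement you are attempting to prove is a \emph{conjecture}, not a theorem: the paper does not prove $n(2,d)=b(d)$, it only proves the lower bound $n(2,d)\geqslant b(d)$ (Theorem~\ref{Theorem Main a(n)}) and explicitly leaves the equality open, noting that it has been verified only up to $d=7$ and that the best known upper bound is $n(2,d)\leqslant d^2+1$ due to Huang and Sudakov.

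Your entire proposal is a construction: you aim to exhibit a $2$-neighborly code of size $b(d)$ in $S^d$. Even if carried out flawlessly, this yields only $n(2,d)\geqslant b(d)$, i.e., the theorem, not the conjecture. Nowhere do you address the upper bound $n(2,d)\leqslant b(d)$, and a construction cannot in principle supply it. That is the genuine gap.

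As a side remark, your sketch for the \emph{lower} bound is in the same spirit as the paper's actual proof of Theorem~\ref{Theorem Main a(n)}: the paper also builds a ``list algebra'' (concatenation, pairing, sums of lists) and runs a recursive construction, though not the one-coordinate-at-a-time induction you envision. Instead it works with \emph{nice triples} $(A,B,C)$ of lists and a binary \emph{compound} operation $\otimes$ that roughly doubles the ambient dimension at each step; four carefully chosen base triples $T^\flat,T^\dag,T^\ddag,T^\sharp$ (extracted from optimal codes in dimensions $4,5,6,7$) are iterated under $\otimes$ so that the resulting $\delta$-values cover every integer $d\geqslant 4$, and closed-form recursions for $n(T)$ and $g(T)$ recover exactly $b(d)$. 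The doubling of $3\cdot 2^r$ in the sequence $a(n)$ that you flagged is indeed visible there, but it emerges from the arithmetic of the $\otimes$-iteration rather than from tracking a tight-pair graph along a single new coordinate.
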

The initial values of the function $b(d)$ are collected in Table \ref{Table b(d)}. The conjectured equality $n(2,d)=b(d)$ was computationally verified by Łuba \cite{Luba} up to $d=7$. Let us mention, however, that the currently best upper bound for $n(2,d)$, due to Huang and Sudakov \cite{HS}, is $n(2,d)\leqslant d^2+1$, which is roughly twice as big as the function $b(d)$.

\begin{table}[H]
	\begin{center}
		\begin{tabular} {|c|c|c||c|c|c|c||c|c|c|c|c|c|c||c|c|c|c|c|c|c|c|c|}
			\hline
			$d$ &2&3&4&5&6&7&8&9&10&11&12&13&14&15&16&17&18&19&20\\
			\hline
			$b(d)$ &4&6&9&12&16&21&27&33&40&48&57&67&78&90&102&115&129&144&160\\
			\hline
		\end{tabular}
	\end{center}
	
	\caption{Initial values of the function $b(d)$.}
	\label{Table b(d)}
\end{table}

Clearly, the function $b(d)$ can be explicitly determined. Let $d \geqslant 4$, and let $r\geqslant 0$ be the unique integer such that  
$$
3\cdot 2^r+r+1\leqslant d\leqslant
3\cdot 2^{r+1} +r+1.
$$
Then one may calculate that
$$
b(d)=  \frac {d^2}2 - \frac {d(2r+3)}2 + 6\cdot 2^r +\frac {(r+1)(r+2)} 2.
$$

By Proposition \ref{Proposition Alon} we may state Theorem \ref{Theorem Main a(n)} and Conjecture \ref{Conjecture n(2,d)=b(d)} in terms of bipartite coverings and the number $\bp_2(K_n)$.
\begin{theorem}\label{Theorem Main bp}
	For each $d\geqslant 2$, we have $\bp_2(K_{b(d)})\leqslant d$. 
\end{theorem}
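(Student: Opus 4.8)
The plan is to derive Theorem~\ref{Theorem Main bp} from its coding-theoretic counterpart. By Proposition~\ref{Proposition Alon}, the inequality $\bp_2(K_N)\leqslant d$ holds exactly when $N\leqslant n(2,d)$, so $\bp_2(K_{b(d)})\leqslant d$ is equivalent to the bound $n(2,d)\geqslant b(d)$ asserted in Theorem~\ref{Theorem Main a(n)}. It therefore suffices to exhibit, for every $d\geqslant 2$, a $2$-neighborly code $C_d\subseteq S^d$ with $|C_d|=b(d)$; reading each coordinate $i$ as a bipartite clique $B_i$ with parts $\{u:u_i=0\}$ and $\{u:u_i=1\}$, the defining condition $1\leqslant\dist(u,v)\leqslant 2$ says precisely that every edge of $K_{b(d)}$ lies in at least one and at most two of the $B_i$.

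First I would fix the recursion $b(d)=b(d-1)+a(d-2)$ as a blueprint and build the codes $C_d$ by induction on $d$, spending the single new coordinate $d$ at each step. The base case $C_2=\{00,01,10,11\}$ is visibly $2$-neighborly with $|C_2|=b(2)=4$. For the inductive step I would append the symbol $0$ to every word of $C_{d-1}$ (which leaves all pairwise distances unchanged, since the words then agree in coordinate $d$) and adjoin a block of $a(d-2)$ new words, each carrying $1$ in coordinate $d$. Every old--new pair then automatically passes in coordinate $d$ and so contributes exactly $1$ to its distance; hence the old--new constraint $1\leqslant\dist\leqslant 2$ collapses to the demand that each new word, read on its first $d-1$ coordinates, lie at distance $0$ or $1$ from every word of $C_{d-1}$, while the new--new constraint says that the new words by themselves form a $2$-neighborly code. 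Provided both demands can be met with $a(d-2)$ new words, the telescoping definition of $b$ yields $|C_d|=|C_{d-1}|+a(d-2)=b(d)$, and the reduction above finishes the proof.

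The step I expect to be the main obstacle is exactly the guarantee that $a(d-2)$ new words can always be fitted in. The set of admissible new words---those lying within distance $1$ of the \emph{entire} code $C_{d-1}$---depends sensitively on the internal layout of $C_{d-1}$, not on its cardinality alone, and as $d$ grows the ``within distance $1$ of everything'' requirement tightens while the target $a(d-2)$ keeps increasing. Reconciling this tension is where the ``algebra on lists of ternary strings'' promised in the abstract must intervene: the recursion has to transport enough structural information from $C_{d-1}$ to secure a fresh supply of centers of the right size at every step, and at the same time certify the new--new distance bounds. The doubling signalled by the defining sequence---each value $3\cdot 2^{r}$ occurring twice, so that the increment $a(d-2)$ equals $3\cdot 2^{r}$ at the distinguished dimensions $d=3\cdot 2^{r}+r+1$---indicates that the codes are engineered to be self-similar, with the admissible block undergoing a doubling near these dimensions. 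Making this self-similarity explicit, and verifying the two families of distance constraints uniformly in $d$, is the technical heart of the argument.
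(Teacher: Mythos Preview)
Your opening reduction is correct and is exactly how the paper passes from Theorem~\ref{Theorem Main a(n)} to Theorem~\ref{Theorem Main bp}: Proposition~\ref{Proposition Alon} makes the two statements equivalent, and the covering $B_1,\ldots,B_d$ you describe is the standard translation. So as a proof of the \emph{displayed} theorem, the first paragraph already suffices, granted Theorem~\ref{Theorem Main a(n)}.

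Where you diverge from the paper---and leave a genuine gap---is in the attempted proof of Theorem~\ref{Theorem Main a(n)} itself. Your one-coordinate-at-a-time induction demands, at step $d$, a block of $a(d-2)$ strings in $S^{d-1}$ that are pairwise $2$-neighborly and simultaneously at distance at most~$1$ from \emph{every} member of $C_{d-1}$. You correctly flag this as the crux, but you do not supply it, and it is not clear the scheme can be completed as stated: the ``distance $\leqslant 1$ from everything'' condition forces more and more coordinates of the new words to be jokers as $C_{d-1}$ grows, working against the requirement that the new block itself remain large and $2$-neighborly. The paper avoids this tension entirely. Its construction is not incremental in $d$; instead it introduces \emph{triples} $T=(A,B,C)$ (where $A\ominus B$ is the $2$-neighborly code and $C$ is a $1$-neighborly sublist compatible with $B$) and a binary \emph{compound} $T\otimes T'$ that roughly doubles all parameters at once: $\delta(T'')=\delta(T)+\delta(T')-\alpha(T)+2$ and $n(T'')=n(T)+n(T')+g(T)g(T')$. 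Starting from four hand-built triples $T^\flat,T^\dag,T^\ddag,T^\sharp$ covering $d=4,5,6,7$, the iterates $\ka T_k=\ka T_{k-1}\otimes\ka T_{k-1}$ hit every $d\geqslant 4$ (the ranges $I_k=[\delta_k^\flat,\delta_k^\sharp]$ partition $\{4,5,\ldots\}$), and a closed-form computation (Proposition~\ref{wyrazenia}) shows that $n(T)=b(d)$ whenever $\delta(T)=d$. The doublings $3\cdot 2^r$ in your sequence $a(n)$ are thus an artifact of this pairing recursion, not of a step-by-step extension; the auxiliary list $C$ in each triple is precisely the structural witness that your sketch asks for but does not produce.
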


To state our conjecture using the number $\bp_2(K_n)$, let us define the sequence of numbers $c(n)$, ($n\geqslant 2$), which is in some sense dual to $b(d)$. It can be described as the non-decreasing list of all positive integers such that the last occurrence of any number $d\geqslant 2$ is at $b(d)$ (see Table \ref{Table c(n)}). In particular, we have $c(b(d))=d$, for all $d\geqslant 2$. 
 
 \begin{table}[H]
 	\begin{center}
 		\begin{tabular} {|c|c|c|c|c|c|c|c|c|c|c|c|c|c|c|c|c|c|c|c|c|c|c|c|c|}
 			\hline
 			$n$ &\textbf{2}&3&\textbf{4}&5&\textbf{6}&7&8&\textbf{9}&10&11&\textbf{12}&13&14&15&\textbf{16}&17&18&19&20&\textbf{21}&22&23\\
 			\hline
 			$c(n)$ &\textbf{1}&2&\textbf{2}&3&\textbf{3}&4&4&\textbf{4}&5&5&\textbf{5}&6&6&6&\textbf{6}&7&7&7&7&\textbf{7}&8&8\\
 			\hline
 		\end{tabular}
 	\end{center}
 	
 	\caption{Initial values of the function $c(n)$.}
 	\label{Table c(n)}
 \end{table}
 
 By Proposition \ref{Proposition Alon} the following statement is equivalent to Conjecture \ref{Conjecture n(2,d)=b(d)}.
 \begin{conjecture}\label{Conjecture bp}
 	For every $n\geqslant 2$, we have $\bp_2(K_n)=c(n)$.
 \end{conjecture}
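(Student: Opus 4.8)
The plan is to prove the two inequalities $\bp_2(K_n)\leqslant c(n)$ and $\bp_2(K_n)\geqslant c(n)$ separately. The upper bound is the easy half and follows from what is already available: for $b(d-1)<n\leqslant b(d)$ we have $c(n)=d$, and since a $2$-covering of $K_{b(d)}$ restricts (by deleting vertices) to a $2$-covering of any subgraph $K_n$ with $n\leqslant b(d)$, monotonicity of $\bp_2$ together with Theorem \ref{Theorem Main bp} gives $\bp_2(K_n)\leqslant \bp_2(K_{b(d)})\leqslant d=c(n)$. Thus the whole content of the conjecture is the matching lower bound $\bp_2(K_n)\geqslant c(n)$, which by Proposition \ref{Proposition Alon} is equivalent to the optimality statement $n(2,d)\leqslant b(d)$ complementing Theorem \ref{Theorem Main a(n)}.

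To prove $n(2,d)\leqslant b(d)$ I would induct on $d$, aiming at the sharp recursive bound $n(2,d)\leqslant n(2,d-1)+a(d-2)$, which matches $b(d)=b(d-1)+a(d-2)$. Let $\mathcal{C}\subseteq S^d$ be an optimal $2$-neighborly code and fix the last coordinate, splitting $\mathcal{C}$ into the classes $\mathcal{C}_\ast,\mathcal{C}_0,\mathcal{C}_1$ according to whether the last symbol is $\ast$, $0$, or $1$. Deleting the last coordinate is an isometry for $\dist$ on $\mathcal{C}_\ast$ and on each of $\mathcal{C}_0,\mathcal{C}_1$, so the projection of $\mathcal{C}_\ast$ is a $2$-neighborly code in $S^{d-1}$ and hence $|\mathcal{C}_\ast|\leqslant n(2,d-1)\leqslant b(d-1)$ by the inductive hypothesis. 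The remaining task is to bound $|\mathcal{C}_0|+|\mathcal{C}_1|$ by $a(d-2)$.

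Write $\mathcal{D}$ for the projection of $\mathcal{C}_0\cup\mathcal{C}_1$ to the first $d-1$ coordinates, $2$-coloured by the deleted last symbol. Since passing in the last coordinate adds exactly $1$ to the distance, within a colour class the projected strings are at distance $1$ or $2$, while across the two classes they are at distance $0$ or $1$; the cross-class condition is thus a relaxed $1$-neighborly (Graham--Pollak) constraint. I would try to show that any such bichromatic family in $S^{d-1}$ has size at most $a(d-2)$. The key structural point is that the extremal size here should obey its own recursion, and I expect the doubled values $3\cdot 2^r$ in the definition of $a$ to arise from a self-similar ``duplication'' step in that recursion, mirroring the doubling used in the authors' construction; proving that at the exceptional lengths no configuration beats the duplicated one is what forces the repeated terms. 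One also has to account for the interaction between $\mathcal{C}_\ast$ and $\mathcal{C}_0\cup\mathcal{C}_1$ (a $\ast$-string and a $0/1$-string overlap in the last coordinate, so their projections are still at distance $1$ or $2$), which I would handle either by choosing the splitting coordinate so as to maximize $|\mathcal{C}_0|+|\mathcal{C}_1|$ or by an amortized count over all $d$ coordinates.

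The main obstacle is exactly this sharp bound on the bichromatic family together with the emergence of the $3\cdot 2^r$ doubling. The leading order is not the difficulty, since $b(d)$ agrees to leading order with $\binom{d}{0}+\binom{d}{1}+\binom{d}{2}\approx d^2/2$, and the best known upper bound $n(2,d)\leqslant d^2+1$ of Huang and Sudakov is already within a factor $2$, which suggests that a refined rank or polynomial-method argument could recover the constant $\tfrac12$. What is hard---and why the statement remains a conjecture---is capturing the exact lower-order behaviour encoded by $a$: a purely dimension-counting argument appears too lossy to see the doublings, so the recursive combinatorial structure of the construction must be matched term by term on the upper-bound side.
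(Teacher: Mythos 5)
You should first be clear about the status of the statement: it is posed in the paper as an open conjecture, and the paper contains no proof of it. What the paper does establish is exactly your first half: Theorem \ref{Theorem Main bp} together with the monotonicity of $\bp_2$ under passing to subcliques gives $\bp_2(K_n)\leqslant c(n)$, and via Proposition \ref{Proposition Alon} the remaining inequality $\bp_2(K_n)\geqslant c(n)$ is equivalent to the optimality statement $n(2,d)\leqslant b(d)$ complementing Theorem \ref{Theorem Main a(n)}. Your reduction of the conjecture to this upper bound is correct, and it matches the paper's point of view (the authors note the conjecture is only verified computationally up to $n=27$, i.e.\ $d=7$). Everything therefore hinges on your inductive scheme for $n(2,d)\leqslant n(2,d-1)+a(d-2)$, and that scheme, as formulated, cannot work.

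The concrete flaw is your key lemma that the ``bichromatic family'' obtained by projecting $\mathcal{C}_0\cup\mathcal{C}_1$ has size at most $a(d-2)$. The cross-class constraint (projected distance $0$ or $1$) is vacuous when one class is empty, and within a class you only require distances in $\{1,2\}$; hence any $2$-neighborly code in $S^{d-1}$, taken monochromatically, is such a family, of size up to $n(2,d-1)\approx d^2/2$, vastly exceeding $a(d-2)\leqslant d-1$. The lemma also fails for genuinely bichromatic families arising from optimal codes: for the paper's extremal code $F_4=HH$ with $H=[00,01,1*]$ (so $d=4$, $|F_4|=n(2,4)=9$), every one of the four coordinates carries either $6$ or $9$ non-joker symbols, while $a(d-2)=a(2)=3$; e.g.\ splitting at coordinate $1$ yields a bichromatic family of size $9$ in $S^3$ satisfying all your distance constraints. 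So no choice of splitting coordinate can give $|\mathcal{C}_0|+|\mathcal{C}_1|\leqslant a(d-2)$ in general, even though the total count $|\mathcal{C}_\ast|+|\mathcal{C}_0|+|\mathcal{C}_1|$ is consistent with $b(d)$: in $F_4$ at coordinate $2$ one gets $3+6$, so the bound $|\mathcal{C}_\ast|\leqslant n(2,d-1)$ is slack and the deficit must be transferred to the other part --- precisely the interaction you defer to an unspecified ``amortized count''. Averaging cannot rescue this either: by the paper's distribution theorem, strings in the constructed codes of length $d\approx 3\cdot 2^k$ carry only $2k+O(1)$ non-jokers, so the per-coordinate non-joker count averages $\Theta(d\log d)$, an order larger than $a(d-2)$. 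Note finally that your target recursion is strictly stronger (for $d\geqslant 5$, since $a(d-2)\leqslant d-1<d=n(1,d-1)$) than the $k=2$ case of Conjecture \ref{Conjecture Pascal}, which is itself open; a single-coordinate splitting argument is exactly the kind of lossy count that, as you yourself observe at the end, cannot detect the $3\cdot 2^r$ doublings, so the proposal does not constitute a proof and the statement remains open.
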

By the work of Łuba \cite{Luba} we know that the conjectured equality $\bp_2(K_n)=c(n)$ holds up to $n=27$. Thus the first occasion to disprove the conjecture is to find a bipartite $2$-covering of $K_{28}$ with $8$ bipartite cliques.

\section{Proof of the main result}
To prove Theorem \ref{Theorem Main a(n)} we will construct recursively a family of $2$-neighborly codes of sizes equal to $b(d)$ for each $d\geqslant 3$. This construction resembles a product constructions from \cite{Alon} and \cite{AlonGKP}, but there are some new features allowing to equip constructed codes with additional structural properties. Though a basic idea is not very complicated, to describe it in detail we need to build a kind of ``algebra'' on ordered lists of strings, or even on triples of lists. This setting may look a bit complicated at first glance, but we believe that it is interesting in its own and has some potential for future applications in constructions of $k$-neighborly codes for $k\geqslant 3$.

\subsection{Operations on lists of strings} 
We shall be concerned with ordered \emph{lists} of strings in $S^d$ and some operations on them. A list of strings will be written as $L=[v_i:i=1,\ldots,n]$, ($v_i\in S^d$), and we shall always assume that all strings in one list have the same length. We will also call the number $n$ of elements in the list $L$ the \emph{length} of $L$ and denote it by $|L|$. Notice that $|L|$ is not the same as the number of distinct strings in $L$, as one string may occur in many positions of the list.

If $A=[v_i:i=1,\ldots,n]$ and $B=[w_i:1,\ldots,n]$ are two lists of the same length $n$, then their \textit{pairing} is defined by
$$
A\ominus B= [v_iw_i\colon i=1,\ldots, n],
$$
where $v_iw_i$ is the usual concatenation of strings. So, the length of the pairing $A\ominus B$ equals to the length of each of the two components of the pairing. For example, if $A=[a,b,c]$ and $B=[x,y,z]$, then $A\ominus B=[ax, by, cz]$.

If $A$ and $B$ are two lists of not necessarily equal lengths $m$ and $n$, respectively, then we define their \textit{concatenation} by
$$
AB =[v_iw_j: i=1,\ldots, m, j=1,\ldots, n],
$$
where the linear order of elements in the new list is ''alphabetical'' with respect to pairs $(i.j)$. For example, if $A=[a,b,c]$ and $B=[x,y]$, then $AB=[ax, ay, bx, by, cx, cy]$. So, the length of the concatenation $AB$ is the product $mn$ of the lengths of the two components. 

If $A$ and $B$ are two lists containing strings of the same length, then the sum $A+B$ is a list of the elements of $A$ followed by the elements of $B$. If $A$ and $B$ are as in the previous example, then $A+B=[a,b,c,x,y]$. 

Lists $k\cdot A$, where $k$ runs over positive integers, are defined inductively: 
$$
1\cdot A=A; \quad (k+1)\cdot A= k\cdot A + A.
$$
We further assume that $A\cdot k$ is an alternative notation for $k\cdot A$. 

It can be convenient to write a list in block or array form. For example, if we write
$$
A=\begin{array}{cc}
	A_{11} & A_{12}\\
	A_{21} & A_{22}
\end{array},
$$
then we mean that the lists in the columns consist of strings of the same length and $A=A_{11}A_{12} + A_{21}A_{22}$. The general rule is to first concatenate the rows and then sum the resulting lists.

\subsection{Triples of lists; relations and operations}

We shall consider \emph{triples} of lists $T=(A, B, C)$ sharing the following properties:

\begin{enumerate}
	\item For any two strings $u,v \in A$, $\dist (u, v)\leqslant 1$;
	\item  $A$ and $B$ are of equal length and $A\ominus B$ is $2$-neighborly; 
	\item $C$ is a  $1$-neighborly sublist of $B$; moreover, for every element $u\in B$ and every $v\in C$, $\dist (u, v) \leqslant 1$. 
\end{enumerate}

For convenience, any triple $T=(A,B,C)$ satisfying properties (1)--(3) will be called \emph{nice}. 
We denote by $\alpha(T)$  the length of strings belonging to $A$. By $\beta(T)$, we mean the length of strings from $B$.  Then $\delta(T)=\alpha(T)+\beta(T)$ is the length of strings from $A\ominus B$. Moreover, the length $|A|$ of the list $A$ is denoted by $n(T)$. Then $n(T)=|B|=|A\ominus B|$.  Finally, $g(T)=|C|$. Since $A\ominus B$ is 2-neighborly, it follows that $n(T)\leqslant n(2, \delta(T))$. 

Two triples $T=(A, B, C)$ and $T'=(A', B', C')$ are \textit{concordant} if $\alpha(T)=\alpha(T')$ and for every $u, v$ from  $A+A'$, $\dist (u, v) \leqslant 1$.

For a pair of concordant triples $T,T'$, we may define their \textit{compound} $ T\otimes  T'=(A'', B'', C'')$, where 
\begin{equation}
	\label{konstrukcja}
	A''=\begin{array}{ll}[0]&A\\{[}0]&A' \\ {[}1]&[*^{\alpha(T)}]\cdot |CC'|\end{array}, \quad B''=\begin{array}{lll} [0] &B &[*^{\beta(T')}]\\ {[}1]&[*^{\beta(T)}]&B'\\{[}*]&C &C'\end{array}, \quad C'' =  \begin{array}{lll}[0] &C & [*^{\beta(T')}]\\{[}1]&[*^{\beta(T)}] &C'\end{array}.
\end{equation}
Let us remark that since $|CC'|=|C||C'|$, the third row of the expression for  $A''$ can be written alternatively: $g(T)g(T')\cdot {[}1][*^{\alpha(T)}]$. 

According to our definition, the pairing $A''\ominus B''$ can be expressed as follows:
\begin{equation}
	\label{agregat}
	A''\ominus B''=\begin{array}{llclll}[0]&A& \ominus &[0] &B &[*^{\beta(T')}]\\{[}0]&A' & \ominus & {[}1]&[*^{\beta(T)}]&B'\\{[}1]&[*^{\alpha(T)}]\cdot |CC'| & \ominus & {[}*]&C &C'\end{array}
\end{equation}
Regarding the order in which operations are performed in rows, we assume that concatenation precedes pairing. It is clear that $A''\ominus B''$ is 2-neighborly. Now, the following proposition is rather obvious.

\begin{proposition}
	\label{numery}
	If triples $T=(A, B, C)$ and $T'=(A', B', C')$ are nice and concordant, then $T''=T\otimes T'$ is nice as well. Moreover,
	\begin{enumerate}
		\item $\alpha(T'')=\alpha(T)+1=\alpha(T')+1$,
		\item $\beta(T'')=\beta(T)+\beta(T')+1$,
		\item $\delta(T'')=\delta(T)+\delta(T')-\alpha(T)+2$, 
		\item $g(T'')=g(T)+g(T')$,
		\item $n(T'')=n(T)+n(T')+g(T)g(T')$.
	\end{enumerate}
\end{proposition}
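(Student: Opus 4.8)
The plan is to dispatch the five numerical identities first, since they are immediate, and then to spend the real effort on checking that $T''$ is nice. For (1)--(5) I would simply read off string-lengths and list-lengths from the block descriptions in (\ref{konstrukcja}). Prepending one symbol to the strings of $A$, $A'$ and to the constant $*^{\alpha(T)}$ gives $\alpha(T'')=\alpha(T)+1=\alpha(T')+1$ (using concordance, $\alpha(T)=\alpha(T')$); the three-part shape of $B''$ gives $\beta(T'')=\beta(T)+\beta(T')+1$, and then $\delta(T'')=\alpha(T'')+\beta(T'')$ rearranges to identity (3). Counting rows, $|C''|=|C|+|C'|$ yields (4), while $|A''|=|A|+|A'|+|CC'|$ with $|CC'|=g(T)g(T')$ yields (5); the same count for $B''$ confirms $|A''|=|B''|$, which is the length clause of niceness property (2).

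For the niceness of $T''$ I would fix a uniform coordinate bookkeeping. Each string of $A''\ominus B''$ has length $\delta(T'')$ and splits into five consecutive blocks: a leading symbol, an $\alpha(T)$-block, a middle symbol, a $\beta(T)$-block, and a $\beta(T')$-block. Reading (\ref{agregat}), the three row-families are $p_i=0\,v_i\,0\,w_i\,*^{\beta(T')}$, $q_i=0\,v_i'\,1\,*^{\beta(T)}\,w_i'$, and $r_{jk}=1\,*^{\alpha(T)+1}\,u_j\,u_k'$, where $v_iw_i$, $v_i'w_i'$ range over $A\ominus B$ and $A'\ominus B'$, and $u_j\in C$, $u_k'\in C'$. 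The single principle used everywhere is that a block that is all jokers on at least one side contributes $0$ to the distance, so only blocks carrying binary data on both sides count.

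With this table, property (1), property (2), and the $1$-neighborliness part of (3) become one-line checks. Property (1) holds because within $A$ or $A'$ the leading symbol is $0$ and distances reduce to those inside $A$ or $A'$ (at most $1$ by niceness and concordance), while an $A$- or $A'$-string meets the constant $1\,*^{\alpha(T)}$ string only in the leading symbol, giving distance exactly $1$. For property (2) the same-group pairings reduce to $\dist$ inside $A\ominus B$, inside $A'\ominus B'$, or to $\dist(u_j,u_{j'})+\dist(u_k',u_{k'}')$, all landing in $[1,2]$ by the $2$-neighborliness of $A\ominus B$, $A'\ominus B'$ and the $1$-neighborliness of $C,C'$. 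The three cross-group pairings are exactly where the hypotheses are spent: $p_i$ versus $q_{i'}$ gives $\dist(v_i,v_{i'}')+1$, capped at $2$ by concordance; $p_i$ versus $r_{jk}$ gives $1+\dist(w_i,u_j)$, capped by condition (3) for $T$; and $q_i$ versus $r_{jk}$ gives $1+\dist(w_i',u_k')$, capped by condition (3) for $T'$. Each is at least $1$ because of the mismatched middle or leading symbol. The analogous table for $C''$ (strings $0\,u_j\,*^{\beta(T')}$ and $1\,*^{\beta(T)}\,u_k'$) shows $C''$ is $1$-neighborly, and reading the same table between the $B''$-rows and the $C''$-rows verifies the remaining clause of (3), that every $u\in B''$ is within distance $1$ of every $v\in C''$, the $B''$-row $*\,u_j\,u_k'$ meeting $C''$ through only the $1$-neighborliness of $C$ or $C'$. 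Finally $C''$ is literally a sublist of $B''$, because $C,C'$ are sublists of $B,B'$, so the first two rows of $C''$ occur among the first two rows of $B''$.

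I expect the only genuine difficulty to be organizational rather than mathematical: the construction was evidently reverse-engineered so that each clause of \emph{nice} and \emph{concordant} is precisely what bounds one family of cross-distances by $2$, while the padding jokers keep every other block silent. Keeping the five-block table in view turns the whole verification into routine case-checking. The one point demanding real care is that a $1$-neighborly list has \emph{all} entries distinct and pairwise at distance exactly $1$; this is what guarantees the lower bound in the presence of repeated indices, for instance that the pair $r_{jk},r_{jk'}$ (equal in the $\beta(T)$-block) is still separated in the $\beta(T')$-block, and that the repeated $C$- or $C'$-entries appearing in the third row of $B''$ never drive a distance below $1$ or above $2$.
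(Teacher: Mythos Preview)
Your proposal is correct and matches the paper's approach, which is simply to declare the proposition ``rather obvious'' and give no argument beyond displaying the block form (\ref{agregat}) and remarking that $A''\ominus B''$ is clearly $2$-neighborly. Your five-block bookkeeping and case split is precisely the direct verification the paper leaves to the reader, so there is nothing to compare.
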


Two triples  $T=(A, B, C)$ and  $T'=(A', B', C')$ are \textit{congruent} if they are concordant,  $\beta (T)=\beta(T')$, $n(T)=n(T')$ and $g(T)=g(T')$. Clearly, then we also have $\alpha(T)=\alpha(T')$ and $\delta(T)=\delta(T')$ for congruent triples. As an immediate consequence of our definitions and the preceding proposition one has:
\begin{proposition}
	\label{mod}
	If $S,S'$ and $T,T'$ are two pairs of concordant (congruent) triples, then $S\otimes T$, $S'\otimes T'$ is a pair of concordant (congruent) triples. Also, $S\otimes T$ and $T\otimes S$ are congruent. 
\end{proposition}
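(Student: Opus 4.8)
The plan is to prove Proposition \ref{mod} directly from the definitions of \emph{concordant} and \emph{congruent} triples, together with Proposition \ref{numery}, which already tells us how all the numerical invariants $\alpha,\beta,\delta,n,g$ transform under the compound operation $\otimes$. Since both ``concordant'' and ``congruent'' are defined entirely in terms of these invariants plus a distance condition on the $A$-lists, the whole statement should reduce to a bookkeeping verification, and I expect no genuine obstacle—only care with the distance condition.

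First I would handle the concordant case. Suppose $S,S'$ are concordant and $T,T'$ are concordant, and form $S\otimes T=(A'',B'',C'')$ and $S'\otimes T'=(A''',B''',C''')$. To show these two are concordant I must check two things: that $\alpha(S\otimes T)=\alpha(S'\otimes T')$, and that every pair $u,v$ drawn from $A''+A'''$ satisfies $\dist(u,v)\leqslant 1$. The first is immediate from part (1) of Proposition \ref{numery}, namely $\alpha(S\otimes T)=\alpha(S)+1$ and $\alpha(S'\otimes T')=\alpha(S')+1$, since concordance of $S,S'$ gives $\alpha(S)=\alpha(S')$ (in fact this is already built into the definition of concordant through $\alpha(S)=\alpha(S')$). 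The distance condition is where I would spend the actual work: each string in $A''$ has the block form given in \eqref{konstrukcja}, so a generic element of $A''+A'''$ is either a $0$-prefixed string coming from one of the $A$-lists of $S$, $T$, $S'$, $T'$, or a $1$-prefixed all-joker string of the form $[1][*^{\alpha}]$. Comparing two $1$-prefixed strings gives distance $0$; comparing a $0$-prefix with a $1$-prefix gives distance exactly $1$ in the leading coordinate and $0$ elsewhere (the tail of the $1$-string is all jokers); and comparing two $0$-prefixed strings reduces, after deleting the common leading $0$, to a distance between two strings lying in $A_S+A_T+A_{S'}+A_{T'}$. The hypotheses that $S,S'$ are concordant, $T,T'$ are concordant, and additionally that $S,T$ are concordant (needed to even form $S\otimes T$) must be combined to conclude that all four underlying $A$-lists are pairwise at distance $\leqslant 1$; this is the one point requiring attention, and I would verify that the available concordance relations indeed cover every cross-pair.

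Next I would treat the congruent case, which adds to concordance the three equalities $\beta(S)=\beta(S')$, $n(S)=n(S')$, $g(S)=g(S')$ (and correspondingly for $T,T'$). Granting the concordant part just proved, I only need to check that $\beta$, $n$, and $g$ agree for $S\otimes T$ and $S'\otimes T'$. Each follows by substituting the congruence equalities into the transformation formulas of Proposition \ref{numery}: $\beta(S\otimes T)=\beta(S)+\beta(T)+1=\beta(S')+\beta(T')+1=\beta(S'\otimes T')$ from part (2), $n(S\otimes T)=n(S)+n(T)+g(S)g(T)=n(S')+n(T')+g(S')g(T')=n(S'\otimes T')$ from part (5), and $g(S\otimes T)=g(S)+g(T)=g(S')+g(T')=g(S'\otimes T')$ from part (4).

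Finally, for the assertion that $S\otimes T$ and $T\otimes S$ are congruent, I would observe that the compound operation is ``symmetric up to reordering'': swapping the roles of the two triples permutes the blocks in \eqref{konstrukcja} (the roles of $B$ and $B'$, of $C$ and $C'$, and the two $0$-prefixed rows of $A''$ are interchanged), which changes neither the multiset of strings nor their lengths in a way that affects the invariants. Concretely, from Proposition \ref{numery} the formulas for $\alpha$, $\beta$ (using $\beta(S)+\beta(T)+1=\beta(T)+\beta(S)+1$), $n$, and $g$ are all symmetric in the two arguments, so $S\otimes T$ and $T\otimes S$ share all four invariants; concordance between them follows exactly as in the first part, since the combined $A$-list $A_{S\otimes T}+A_{T\otimes S}$ again consists only of $0$-prefixed strings from $A_S+A_T$ and $1$-prefixed joker strings. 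Thus both are congruent, completing the proof.
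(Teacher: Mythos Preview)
Your plan mirrors the paper's own treatment, which simply calls the proposition an ``immediate consequence'' of the definitions and Proposition~\ref{numery}; your numerical verifications for $\alpha,\beta,n,g$ and your argument for the congruence of $S\otimes T$ with $T\otimes S$ are all correct.

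There is, however, a genuine gap at exactly the point you flag as ``requiring attention''. You say you would verify that the available concordance relations cover every cross-pair among $A_S,A_T,A_{S'},A_{T'}$, but they do not. The stated hypotheses give only four of the six pairwise concordances: $(S,S')$ and $(T,T')$ by assumption, and $(S,T)$, $(S',T')$ implicitly so that the two compounds exist. The pairs $(S,T')$ and $(S',T)$ are missing, and concordance is not transitive because the distance condition is not. Concretely, take singleton $A$-lists $A_S=[00]$, $A_T=[01]$, $A_{S'}=[0*]$, $A_{T'}=[11]$, each extended trivially to a nice triple with $B=C=[0]$. All four required concordances hold, yet in $A_{S\otimes T}+A_{S'\otimes T'}$ the strings $000$ and $011$ are at distance $2$, so $S\otimes T$ and $S'\otimes T'$ are \emph{not} concordant.

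This is in fact a minor imprecision in the paper's own statement rather than a flaw in your method: every use of Proposition~\ref{mod} in the paper (in particular the passage from $\ka T_k$ to $\ka T_{k+1}=\ka T_k\otimes\ka T_k$) draws all four triples from a single set of \emph{pairwise} concordant triples, so all six concordances are available and your argument then goes through verbatim. The fix is simply to add the hypothesis that $S,T'$ and $S',T$ are also concordant; with that amendment your proof is complete.
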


Let us remind that by Theorem \ref{Theorem Zaks Strings}, $n(1,d)=d+1$. Therefore, if $T=(A,B,C)$ is  a triple such that $g(T)=\beta(T)+1$, then $C$ is the maximum length $1$-neighborly sublist of $B$. In fact, it has the maximum length among all $1$-neighborly lists of strings of length $\beta(T)$. This observation explains the meaning of the following assertion.
\begin{proposition}
	Let $T$ and $T'$ be concordant triples such that $g(T) =\beta(T)+1$ and  $g(T') =\beta(T')+1$, then $g(T'')=\beta(T'')+1$, where  $T''=T\otimes T'$.  
\end{proposition}
The proof is an immediate consequence of Proposition \ref{numery}.

\subsection{Sequences of triples of lists}

For a triple $T$, we can define by induction the sequence $T_k$, ($k\geqslant0$), as follows:
\begin{equation}
	\label{ind}
	T_0=T, \quad T_{k+1}= T_{k} \otimes T_{k}, (k\geqslant0). 
\end{equation}

The following proposition summarizes numerical properties of this sequence.

\begin{proposition} 
	\label{numprop}
	Let $T$ be a triple and let $T_k$, ($k\geqslant0$), be a sequence of triples  given by (\ref{ind}). Then
	\begin{enumerate}
		\item $\alpha(T_k)=\alpha(T)+k$,
		\item $\beta(T_k)=2^k\beta(T)+2^k-1$,
		\item $\delta(T_k)=2^k\beta(T) +\alpha(T) +2^k +k-1 = 2^k\delta(T) - (2^k-1)\alpha(T) +2^k+k -1$, 
		\item $g(T_k)=2^k g(T)$,
		\item $n(T_k)=2^k n(T) +\frac{4^k-2^k}2 g(T)^2$.
	\end{enumerate}
	Moreover, if $g(T)=\beta(T)+1$, then $g(T_k)=\beta(T_k)+1$ for every non-negative integer $k$. 
\end{proposition}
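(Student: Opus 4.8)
The plan is to prove Proposition \ref{numprop} by induction on $k$, using Proposition \ref{numery} as the single-step engine. The base case $k=0$ is immediate from $T_0=T$ in every formula (e.g. $2^0\beta(T)+2^0-1=\beta(T)$, $\frac{4^0-2^0}{2}g(T)^2=0$, etc.). For the inductive step, I would observe that since $T_{k+1}=T_k\otimes T_k$, the two arguments of the compound are literally the same triple, hence trivially concordant and congruent; so Proposition \ref{numery} applies with $T=T'=T_k$, and each of its five update rules specializes by setting the primed quantities equal to the unprimed ones.

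First I would handle the four ``easy'' parameters $\alpha,\beta,g$ and the derived $\delta$, since their recurrences decouple. From Proposition \ref{numery}(1) we get $\alpha(T_{k+1})=\alpha(T_k)+1$, which telescopes to $\alpha(T)+k$ after $k$ steps — here the two expressions $\alpha(T)+1=\alpha(T')+1$ collapse to one because the arguments coincide. From (4), $g(T_{k+1})=2g(T_k)$, giving $g(T_k)=2^kg(T)$. From (2), $\beta(T_{k+1})=2\beta(T_k)+1$; solving this affine recurrence (e.g. via the substitution $\beta(T_k)+1$, which doubles each step) yields $\beta(T_k)=2^k\beta(T)+2^k-1$. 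The formula for $\delta$ in (3) I would then derive in two equivalent ways: either directly from $\delta=\alpha+\beta$ together with the already-proved formulas for $\alpha$ and $\beta$, or by iterating Proposition \ref{numery}(3); the second displayed form follows from the first by substituting $\beta(T)=\delta(T)-\alpha(T)$ and simplifying. I would note that the $g(T)=\beta(T)+1$ invariant is preserved by the compound (this is exactly the content of the unnumbered proposition just before this one, applied to $T'=T$), so it propagates through the whole sequence; one can also check it directly against the closed forms, since $2^kg(T)=2^k(\beta(T)+1)=\beta(T_k)+1$.

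The one genuinely nontrivial piece is the formula for $n$ in part (5), because the recurrence $n(T_{k+1})=2n(T_k)+g(T_k)^2$ (from Proposition \ref{numery}(5) with both arguments equal, so $g(T)g(T')=g(T_k)^2$) has an inhomogeneous term that itself depends on $k$. Substituting the already-established $g(T_k)=2^kg(T)$ turns the inhomogeneity into $4^kg(T)^2$, so the recurrence becomes $n(T_{k+1})=2n(T_k)+4^kg(T)^2$. The hard part will be solving this linear recurrence cleanly: I would divide through by $2^{k+1}$ to get $\frac{n(T_{k+1})}{2^{k+1}}=\frac{n(T_k)}{2^k}+2^{k-1}g(T)^2$, so that the normalized sequence telescopes, and summing the geometric progression $\sum_{j=0}^{k-1}2^{j-1}g(T)^2=\frac{2^k-1}{2}g(T)^2$ recovers $n(T_k)=2^kn(T)+\frac{4^k-2^k}{2}g(T)^2$ after multiplying back by $2^k$. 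Alternatively one can simply verify the claimed closed form satisfies the recurrence and matches the base case, which sidesteps the summation entirely. Everything else is routine bookkeeping, so the write-up amounts to stating the induction hypothesis, invoking Proposition \ref{numery} with identical arguments, and checking the five algebraic identities.
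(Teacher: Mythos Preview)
Your proposal is correct and follows exactly the approach the paper indicates: the paper's proof is the one-line ``easy application of Proposition \ref{numery}'', and you have spelled out that induction in full detail, including the solution of the inhomogeneous recurrence for $n(T_k)$. There is nothing to add.
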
 
The proof is again an easy application of Proposition \ref{numery}.

Our next goal is to define two sequences of triples as described in (\ref{ind}). Each of these sequences is determined by the appropriate choice of the starting triple $T_0$. 

Let $H=[00,01,1*]$. Then the concatenation $F_4=HH$ is $2$-neighborly. It consists of strings of length $4$ and $|F_4|=9$. Remind that $n(2,4)=9$. Thus $F_4$ is of the maximum length among $2$-neighborly lists containing strings of length $4$. Clearly, $F_4=A^\flat\ominus B^\flat$, where $A^\flat=3\cdot[00]+3\cdot[01]+3\cdot[1*]$ and $B^\flat=3\cdot H$.   Moreover, $H$ is a $1$-complementary sublist of $B^\flat$. Therefore, if we set $C^\flat=H$, then  the triple $T^\flat=(A^\flat,B^\flat, C^\flat)$ satisfies conditions $1-3$. Now, according to (\ref{ind}), we can define an infinite sequence of triples $T ^\flat_k=(A_k^\flat, B_k^\flat, C^\flat_k)$, ($k\geqslant 0$),  assuming $T_0=T^\flat$.

Let $H$ be as already defined, and $G=[0,1]$. Let $F_7$ be a list containing strings of length $7$,  defined as follows:
$$
F_7=\begin{array}{lllcc}
	{[}0] & G&{[}0] &H &[**]\\
	{[}0] & G& {[}1]& [**] & H\\
	{[}1] & [*] &{[}*] & H & H
\end{array}
$$
$F_7$ is $2$-neighborly and has length $21$. It appears that $F_7$ has the maximum length among  $2$-neighborly lists containing strings of length $7$; that is, $|F_7|=n(2,7)$. Again, we can extract a triple $T^\sharp=(A ^\sharp, B^\sharp, C^\sharp)$
from $F_7$:
\begin{align*}
A^\sharp &= 2\cdot(3\cdot[00]+3\cdot[01]) +9\cdot[1*]\\
B^\sharp &= 2\cdot[0]H[**] + 2\cdot [1][**]H+[*]HH\\
C^\sharp &=[0]H[**]+[1][**]H,
\end{align*}
which is clearly a nice triple. Now, according to (\ref{ind}) we define a sequence $T^\sharp_k=(A_k^\sharp,B_k^\sharp,C_k^\sharp)$, ($k\geqslant0$), assuming $T_0=T^\sharp$.

Proposition \ref{numprop} applied to $T^\flat$, $T^\sharp$ gives us:
\begin{proposition}
	\label{wartosci}
	For every integer $k\geqslant0$,
	$$
	\begin{array}{lclcl}
		\alpha^\flat_k &=& \alpha(T^\flat_k) &=& k+2,\\
		\beta^\flat_k  &=&  \beta(T^\flat_k)  &=& 3\cdot 2^k-1,\\
		\delta^\flat_k &=& \delta(T^\flat_k)  &=& 3\cdot 2^k +k+1,\\ 
		g^\flat_k    &=& g(T^\flat_k)            &=& 3\cdot 2^k,\\
		n^\flat_k    &=& n(T^\flat_k)            &=& \frac 9{2}\cdot 4^k +\frac 9{2}\cdot 2^k,\\
	\end{array}  
	$$
	and
	$$
	\begin{array}{lclcl}
		\alpha^\sharp_k &=& \alpha(T^\sharp_k) &=& k+2,\\
		\beta^\sharp_k  &=&  \beta(T^\sharp_k)  &=& 6\cdot 2^k-1,\\
		\delta^\sharp_k &=& \delta(T^\sharp_k)  &=& 6\cdot 2^k +k+1,\\ 
		g^\sharp_k    &=& g(T^\sharp_k)            &=& 6\cdot 2^k,\\
		n^\sharp_k    &=& n(T^\sharp_k)            &=& 18\cdot 4^k +3\cdot 2^k.\\
	\end{array}  
	$$
\end{proposition}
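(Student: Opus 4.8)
The plan is to obtain each of the two tables of values as a direct specialization of Proposition \ref{numprop}, reading off the five numerical parameters $\alpha, \beta, \delta, g, n$ of the base triples $T^\flat$ and $T^\sharp$ and then substituting into the closed forms of parts (1)--(5). So the first step is to record the initial data. For $T^\flat=(A^\flat,B^\flat,C^\flat)$ we have $\alpha(T^\flat)=2$ (strings in $A^\flat$ such as $00,01,1*$ have length $2$), $\beta(T^\flat)=2$ (the strings of $H$, hence of $B^\flat$, have length $2$), $g(T^\flat)=|C^\flat|=|H|=3$, and $n(T^\flat)=|A^\flat|=9$; consequently $\delta(T^\flat)=\alpha+\beta=4$. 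For $T^\sharp=(A^\sharp,B^\sharp,C^\sharp)$ the strings of $A^\sharp$ have length $2$ so $\alpha(T^\sharp)=2$; the strings of $B^\sharp$ have length $5$ (one letter, then $H$ of length $2$, then $[**]$ of length $2$) so $\beta(T^\sharp)=5$ and $\delta(T^\sharp)=7$; one computes $g(T^\sharp)=|C^\sharp|=|[0]H[**]|+|[1][**]H|=3+3=6$; and $n(T^\sharp)=|A^\sharp|=2\cdot(3+3)+9=21$. I would verify these directly against the displayed lengths ($|F_4|=9$, $|F_7|=21$) as a sanity check.

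Next I would substitute these constants into the formulas of Proposition \ref{numprop}. For $T^\flat$ part (1) gives $\alpha^\flat_k=2+k$; part (2) gives $\beta^\flat_k=2^k\cdot 2+2^k-1=3\cdot 2^k-1$; part (3) then yields $\delta^\flat_k=\alpha^\flat_k+\beta^\flat_k=(k+2)+(3\cdot2^k-1)=3\cdot2^k+k+1$ (equivalently via the boxed formula $2^k\delta-(2^k-1)\alpha+2^k+k-1$); part (4) gives $g^\flat_k=2^k\cdot 3=3\cdot2^k$; and part (5) gives $n^\flat_k=2^k\cdot 9+\tfrac{4^k-2^k}{2}\cdot 3^2=\tfrac92\cdot 4^k+\tfrac92\cdot 2^k$, the $2^k$ terms combining as $9\cdot2^k-\tfrac92\cdot2^k=\tfrac92\cdot2^k$. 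The computation for $T^\sharp$ is identical with the constants $(\alpha,\beta,g,n)=(2,5,6,21)$: one gets $\alpha^\sharp_k=k+2$, $\beta^\sharp_k=2^k\cdot5+2^k-1=6\cdot2^k-1$, $\delta^\sharp_k=6\cdot2^k+k+1$, $g^\sharp_k=6\cdot2^k$, and $n^\sharp_k=2^k\cdot21+\tfrac{4^k-2^k}{2}\cdot36=18\cdot4^k+3\cdot2^k$, where again the $2^k$ terms combine as $21\cdot2^k-18\cdot2^k=3\cdot2^k$. Each of these matches the claimed entry.

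Finally I would note that the hypotheses of Proposition \ref{numprop} are met: both $T^\flat$ and $T^\sharp$ are nice triples (established in the text preceding the statement), and the sequences $T^\flat_k$, $T^\sharp_k$ are defined exactly by the recursion (\ref{ind}) with the appropriate starting triple, so the proposition applies verbatim.

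There is essentially no obstacle here; the only thing requiring any care is the arithmetic bookkeeping in part (5), where the quadratic term $\tfrac{4^k-2^k}{2}g^2$ contributes a $4^k$ piece together with a negative $2^k$ piece that must be merged with the $2^k n$ term from the linear part. Everything else is a one-line substitution, which is why the text can legitimately dismiss the proof as "an easy application of Proposition \ref{numprop}."
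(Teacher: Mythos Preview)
Your proposal is correct and follows exactly the route the paper indicates: the paper's entire proof is the one-line remark ``Proposition \ref{numprop} applied to $T^\flat$, $T^\sharp$ gives us,'' and you have simply carried out that substitution explicitly, with the initial values and the arithmetic all checked correctly.
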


\subsection{Sequences of sets of mutually concordant triples}

If $\ka T$ is a set of pairwise concordant triples, then, by Proposition \ref{mod}, $\ka T \otimes \ka T=\{S\otimes T \colon S, T\in \ka T\}$ is also a set of pairwise concordant triples. Therefore, we can define a sequence $\ka T_k$, ($k\geqslant 0$), by induction:
\begin{equation}
	\label{iteracja}
	\ka T_0 =\ka T, \quad \ka T_{k+1}=\ka T_{k}\otimes \ka T_{k}.
\end{equation}

This sequence is fully specified by $\ka T$. We will examine the properties of such a sequence for an appropriately selected $\ka T$.  Our $\ka T$ will consist of four triples $ T^\flat,  T^\dag, T^\ddag, T^\sharp$. Two of them  are already defined. It remains to specify $T^\dag$ and $T^\ddag$.    

Let $L=[000,001,01*, 1**]$. Let $F_5$ be a list o string of length $5$  defined similarly as $F_4$; that is, $F_5=HL$. Then $F_5$ is $2$-neighborly and $|F_5|=n(2,5)=12$. Let us set $A^\dag=4\cdot[00] +4\cdot[01]+ 4\cdot[1*]$ and $B^\dag= 3\cdot L$. Clearly, $F_5=A^\dag\ominus B^\dag$. Moreover, $C^\dag = L$ is a sublist of $B^\dag$ which is $1$-neighborly. It easily seen that the triple $T^\dag=(A^\dag, B^\dag, C^\dag)$ is nice. 

As it concerns $T^\ddag$ it is defined in a similar manner as $T^\sharp$.  Let us set
$$
F_6=    \begin{array}{rrrcc}
	[0] & G&[0] & G&[**]\\
	{[}0] & G& [1]& [*]& H\\
	{[}1] & [*]&[*] & G & H.
\end{array}
$$
Then $F_6$ is $2$-neighborly and $|F_6|=n(2,6)=16$. We can extract from $F_6$ the following lists:
\begin{align*}
	A^\ddag &=2\cdot[00]+2\cdot[01]+3\cdot[00]+3\cdot[01]+6\cdot[1*]\\
	B^\ddag &=2 [0]G[**]+2[1][*]H+ [*]GH\\
	C^\ddag &=[0]G[**]+[1][*]H.
\end{align*}
Obviously, $F_6=A^\ddag\ominus B^\ddag$ and $T^\ddag=(A^\ddag, B^\ddag, C^\ddag)$ is nice. 

Observe that for every non-negative integer $k$, $T_k^\flat$ and $T_k^\sharp$ belong to $\ka T_k$. Moreover, we claim that the following property holds.

\begin{proposition}
	For every $S \in \ka T_k$, 
	\begin{equation}
		\label{ramka}
		\delta^\flat_k\leqslant \delta(S)\leqslant\delta^\sharp_k.     
	\end{equation}
\end{proposition}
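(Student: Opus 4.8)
The plan is to prove both inequalities simultaneously by induction on $k$, using the fact that $\delta$ transforms additively under the compound operation. The engine is Proposition \ref{numery}(3), which gives $\delta(T\otimes T')=\delta(T)+\delta(T')-\alpha(T)+2$ for concordant $T,T'$.

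The first step is to observe that every triple in $\ka T_k$ shares the same value $\alpha=k+2$. This follows by induction from Proposition \ref{numery}(1): the four starting triples all have $\alpha=2$, since each of $A^\flat,A^\dag,A^\ddag,A^\sharp$ consists of length-$2$ strings, and the compound increases $\alpha$ by exactly one at each level. Substituting $\alpha(T)=k+2$ into the formula above, for any $T,T'\in\ka T_k$ I obtain the clean recursion
\[
\delta(T\otimes T')=\delta(T)+\delta(T')-k,
\]
so that $\delta$ of a compound depends additively on the $\delta$-values of its two inputs.

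For the base case $k=0$, I would simply read off the lengths: the triples $T^\flat,T^\dag,T^\ddag,T^\sharp$ are extracted from $F_4,F_5,F_6,F_7$, whose strings have lengths $4,5,6,7$, hence $\delta$ takes the values $4,5,6,7$. Since $\delta^\flat_0=4$ and $\delta^\sharp_0=7$ by Proposition \ref{wartosci}, the inequality $\delta^\flat_0\leqslant\delta(S)\leqslant\delta^\sharp_0$ holds for every $S\in\ka T_0$. For the inductive step, any $S\in\ka T_{k+1}$ has the form $S=T\otimes T'$ with $T,T'\in\ka T_k$; plugging the inductive bounds $\delta^\flat_k\leqslant\delta(T),\delta(T')\leqslant\delta^\sharp_k$ into the recursion yields
\[
2\delta^\flat_k-k\;\leqslant\;\delta(S)\;\leqslant\;2\delta^\sharp_k-k.
\]
It then remains to identify the outer terms with $\delta^\flat_{k+1}$ and $\delta^\sharp_{k+1}$: applying Proposition \ref{numery}(3) to $T^\flat_{k+1}=T^\flat_k\otimes T^\flat_k$ (and likewise to $T^\sharp$) gives $\delta^\flat_{k+1}=2\delta^\flat_k-k$ and $\delta^\sharp_{k+1}=2\delta^\sharp_k-k$, which also matches the closed forms in Proposition \ref{wartosci}. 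This closes the induction.

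The argument is essentially routine once the reduction $\alpha=k+2$ is in place; there is no subtle monotonicity issue, since the recursion is a plain sum of the two input values. The only point needing a little care is ensuring that $\ka T_k$ really is a set of pairwise concordant triples at every level, so that all compounds and hence all values $\delta(S)$ are defined; this is guaranteed by Proposition \ref{mod} together with the pairwise concordance of the four starting triples. It is also worth noting that the bounds are sharp: they are attained by the iterated self-compounds $T^\flat_k$ and $T^\sharp_k$, which is precisely why these two extremal members control the entire spread of $\delta$ across $\ka T_k$.
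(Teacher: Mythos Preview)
Your proof is correct and follows essentially the same approach as the paper: induction on $k$, using Proposition~\ref{numery}(3) together with the constancy of $\alpha$ across $\ka T_k$ to bound $\delta(T\otimes T')$ between $2\delta^\flat_k-\alpha(T)+2$ and $2\delta^\sharp_k-\alpha(T)+2$, then recognizing these bounds as $\delta^\flat_{k+1}$ and $\delta^\sharp_{k+1}$. Your version is slightly more explicit in spelling out $\alpha=k+2$ and the base case, but the argument is the same.
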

\begin{proof}
To prove the assertion we use a simple induction argument. Obviously the inequalities hold true for $k=0$. Suppose that $T''\in \ka T_{k+1}$. Then $T''=T\otimes T'$ for some $T, T'\in \ka T_k$. By the induction hypothesis, 
$$
\delta^\flat_k\leqslant \delta(T)\leqslant\delta^\sharp_k, \quad\text{and}\quad  \delta^\flat_k\leqslant \delta(T')\leqslant\delta^\sharp_k. 
$$
Combining these two inequalities and applying Proposition \ref{numery}, yields
$$
2\delta^\flat_k\leqslant \delta(T'') +\alpha(T)-2\leqslant 2\delta^\sharp_k. 
$$
Since all the numbers $\alpha(T)$, $\alpha^\flat_k$ and $\alpha^\sharp_k$ are equal, we easily conclude, by subtracting $\alpha(T)-2$, that 
$$
\delta^\flat_{k+1} \leqslant \delta(T'') \leqslant\delta^{\sharp}_{k+1},
$$ 
which completes the proof.	
\end{proof}

Let us remark that by Proposition \ref{wartosci}, for every $k\geqslant0$,
$$
\delta_{k+1}^\flat=\delta_k^\sharp +1. 
$$
Consequently, the family consisting of all ranges of integers $I_{k}=\{ \delta_{k}^\flat,  \delta_{k}^\flat +1, \ldots,  \delta_{k}^\sharp\}$, ($k\geqslant0$), is a partition of the unbounded range $\{4,5,6,\ldots\}$.   
\begin{proposition} 
	\label{na}
	Let $\ka T=\{T^\flat, T^\dag, T^\ddag, T^\sharp\}$ and let $\ka T_k$, ($k\geqslant0$), be the sequence defined by (\ref{iteracja}). Then the mapping $T\mapsto\delta(T)$ sends every $\ka T_k$ `onto' $I_k$.   
\end{proposition}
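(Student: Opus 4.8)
The plan is to prove surjectivity directly by an induction on $k$ that tracks the $\delta$-values of \emph{all} triples in $\ka T_k$ at once, reducing the combinatorics of the $\otimes$-operation to a sumset calculation for intervals of consecutive integers. First I would record a structural fact that makes the recursion tractable: all triples in $\ka T_k$ are pairwise concordant (by Proposition \ref{mod}, applied inductively from the pairwise concordant starting set $\ka T$), and concordance forces them to share a common value of $\alpha$. Since the four starting triples $T^\flat, T^\dag, T^\ddag, T^\sharp$ all have $A$-strings of length $2$, and Proposition \ref{numery}(1) shows that $\alpha$ increases by exactly $1$ under $\otimes$, every $S\in\ka T_k$ satisfies $\alpha(S)=k+2$.

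The crucial observation is that, because $\alpha$ is constant on $\ka T_k$, the recursion for $\delta$ in Proposition \ref{numery}(3) becomes affine-additive, and a single shift turns it into genuine additivity. I would introduce the normalized quantity $e(S)=\delta(S)-\delta_k^\flat$ for $S\in\ka T_k$. Using $\delta(T\otimes T')=\delta(T)+\delta(T')-\alpha(T)+2$ with $\alpha(T)=k+2$, together with the identity $\delta_{k+1}^\flat=6\cdot 2^k+k+2$ read off from Proposition \ref{wartosci}, a short computation shows that $e(T\otimes T')=e(T)+e(T')$. Hence the image set $\{e(S):S\in\ka T_k\}$ is carried, under one iteration $\ka T_k\mapsto\ka T_{k+1}=\ka T_k\otimes\ka T_k$, exactly into its own sumset with itself.

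It then remains to run the induction on the normalized image. For the base case $k=0$, the four chosen triples realize $\delta(T^\flat)=4$, $\delta(T^\dag)=5$, $\delta(T^\ddag)=6$, $\delta(T^\sharp)=7$, so $\{e(S):S\in\ka T_0\}=\{0,1,2,3\}$, which is $I_0$ in normalized form. For the inductive step, assuming $\{e(S):S\in\ka T_k\}=\{0,1,\ldots,3\cdot 2^k\}$, additivity gives that the corresponding set for $\ka T_{k+1}$ is the sumset $\{0,\ldots,3\cdot 2^k\}+\{0,\ldots,3\cdot 2^k\}=\{0,1,\ldots,6\cdot 2^k\}$; this is exactly $I_{k+1}$ in normalized form, since $\delta_{k+1}^\sharp-\delta_{k+1}^\flat=3\cdot 2^{k+1}=6\cdot 2^k$. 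Undoing the shift by adding $\delta_k^\flat$ yields $\{\delta(S):S\in\ka T_k\}=I_k$ for every $k$, which is the claimed ``onto'' statement (and simultaneously reproves the containment into $I_k$).

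I do not expect a genuine obstacle. The only points demanding care are the bookkeeping that establishes additivity of $e$ (checking that the constant value of $\alpha$ and the shift by $\delta_k^\flat$ conspire to cancel all the lower-order terms in the $\delta$-recursion), and verifying that the base set is the \emph{full} block of four consecutive integers $\{4,5,6,7\}$ rather than a proper subset. The latter is precisely why the two intermediate triples $T^\dag$ and $T^\ddag$, supplying $\delta=5$ and $\delta=6$, were adjoined to $T^\flat$ and $T^\sharp$: once the base image is a complete integer interval, the sumset of a complete interval with itself is again a complete interval, so completeness propagates automatically through the iteration.
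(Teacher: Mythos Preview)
Your proposal is correct and follows essentially the same approach as the paper: both argue by induction on $k$, reducing the step from $\ka T_k$ to $\ka T_{k+1}$ to the observation that the map $(\delta,\delta')\mapsto \delta+\delta'-\alpha_k^\flat+2$ sends $I_k\times I_k$ onto $I_{k+1}$. Your normalization $e(S)=\delta(S)-\delta_k^\flat$ makes this sumset-of-intervals calculation a touch more explicit than the paper's one-line appeal to Proposition~\ref{wartosci}, but the underlying argument is the same.
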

\begin{proof}
Again, we proceed by induction. For $k=0$, our proposition is clearly true. If it is true for some $k\geqslant1$, then, since $\delta(T\otimes T')=\delta(T)+\delta(T')-\alpha^\flat_k+2$, for every $T, T' \in \ka T_k$, it suffices to show that the mapping  $I_k\times I_k \ni (\delta, \delta')\mapsto \delta+\delta'-\alpha^\flat_k+2\in I_{k+1}$ is `onto', which is rather obvious in the light of Proposition \ref{wartosci}. 
\end{proof}

\begin{proposition}
	\label{wyrazenia}
	Let $\ka T_k$, ($k\geqslant0$), be the same sequence as in Proposition \ref{na}. Let $T\in \ka T_k$ and $d=\delta(T)$. Then 
	$$
	n(T)=n_k^\flat+g_k^\flat(d-\delta^\flat_k) +\frac{(d-\delta^\flat_k)(d-\delta^\flat_k-1)}2
	$$
	and
	$$
	g(T)= g_k^\flat+ d-\delta^\flat_k=d-k-1.
	$$
\end{proposition}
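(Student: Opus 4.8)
The plan is to prove both formulas simultaneously by induction on $k$, exploiting the recursion $\ka T_{k+1}=\ka T_k\otimes\ka T_k$ together with the numerical rules of Proposition \ref{numery} and the explicit values recorded in Proposition \ref{wartosci}. Throughout I would use the constant relation $\delta_k^\flat-g_k^\flat=k+1$ (read off from Proposition \ref{wartosci}, since $(3\cdot 2^k+k+1)-3\cdot 2^k=k+1$); this is exactly what turns the first form of the $g$-formula into the second, because $g_k^\flat+(d-\delta_k^\flat)=d-(\delta_k^\flat-g_k^\flat)=d-k-1$. So it suffices to verify the single expression $g(T)=g_k^\flat+(d-\delta_k^\flat)$ and the displayed expression for $n(T)$.

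For the base case $k=0$ I would evaluate the two claimed expressions on each of the four generating triples $T^\flat,T^\dag,T^\ddag,T^\sharp$, whose $\delta$-values are $4,5,6,7$, and compare with the directly computed quantities $g(T)=|C|$ and $n(T)=|A|$. Here $n_0^\flat=9$, $g_0^\flat=3$, $\delta_0^\flat=4$; for instance $T^\ddag$ has $d=6$, $g=|C^\ddag|=5=d-1$ and $n=|F_6|=16=9+3\cdot 2+\tfrac{2\cdot 1}{2}$, matching the formulas, and the three remaining triples are checked the same way.

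For the inductive step I would write $T''=T\otimes T'$ with $T,T'\in\ka T_k$ and set $x=\delta(T)-\delta_k^\flat$, $y=\delta(T')-\delta_k^\flat$, $z=\delta(T'')-\delta_{k+1}^\flat$. The key preliminary computation extracts from Proposition \ref{wartosci} the three recursions $\delta_{k+1}^\flat=2\delta_k^\flat-k$, $g_{k+1}^\flat=2g_k^\flat$, and $n_{k+1}^\flat=2n_k^\flat+(g_k^\flat)^2$. Combining the first with $\delta(T'')=\delta(T)+\delta(T')-\alpha^\flat_k+2$ (Proposition \ref{numery}(3)) and $\alpha^\flat_k=k+2$ yields the clean relation $z=x+y$. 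The $g$-formula then follows at once from $g(T'')=g(T)+g(T')$ (Proposition \ref{numery}(4)) and the inductive hypotheses $g(T)=g_k^\flat+x$, $g(T')=g_k^\flat+y$, since $g(T'')=2g_k^\flat+(x+y)=g_{k+1}^\flat+z$.

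For $n$ I would substitute $n(T'')=n(T)+n(T')+g(T)g(T')$ (Proposition \ref{numery}(5)) together with the inductive hypotheses and the factorizations $g(T)=g_k^\flat+x$, $g(T')=g_k^\flat+y$, then collect terms using the three recursions above. After the $2n_k^\flat$ and $(g_k^\flat)^2$ contributions and the linear terms $2g_k^\flat(x+y)$ line up with $n_{k+1}^\flat+g_{k+1}^\flat z$, the entire identity collapses to the elementary Pascal-type relation $\tfrac{x(x-1)}{2}+\tfrac{y(y-1)}{2}+xy=\tfrac{(x+y)(x+y-1)}{2}$, which closes the induction. I do not expect a genuine obstacle: the argument is a routine but slightly fiddly induction, and the only real care points are (i) tracking the three shifted variables $x,y,z$ and verifying $z=x+y$, and (ii) observing that since $z$ depends only on $d''=\delta(T'')$ and not on the chosen decomposition $T''=T\otimes T'$, the formula is well defined (with $z\geqslant 0$ guaranteed by Proposition \ref{na}, which places $d''$ in $I_{k+1}$).
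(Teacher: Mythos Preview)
Your proof is correct and follows essentially the same inductive argument as the paper: both verify the base case on the four generating triples by direct calculation and carry out the inductive step via Proposition~\ref{numery} and the numerical recursions from Proposition~\ref{wartosci}, reducing the $n$-identity to the quadratic relation $\tfrac{x(x-1)}{2}+\tfrac{y(y-1)}{2}+xy=\tfrac{(x+y)(x+y-1)}{2}$. Your introduction of the shifted variables $x,y,z$ and the explicit identification $z=x+y$ is a mild notational streamlining of the paper's computation, but the substance is the same.
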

\begin{proof}
	The proof is by induction. The case $k=0$ results from simple calculations for both functions. Let $T''\in \ka T_{k+1}$. Then $T''=T\otimes T'$ for some $T, T'\in \ka T_k$.  Let $d^i=\delta(T^i)$, where $i$ is  empty, prime or the double prime symbol. By Proposition \ref{numery} and the induction hypothesis, in the case of $g$ we have
	\begin{eqnarray*}
		g(T'') & = &g(T)+ g(T')\\
		& = & g_k^\flat+ d-\delta^\flat_k + g_k^\flat+ d'-\delta^\flat_k\\
		& = & 2g_k^\flat + (d+d' - \alpha^\flat_k+2) - (2\delta^\flat_k-\alpha^\flat_k+2)\\
		& = & g_{k+1}^\flat +d''-\delta^\flat_{k+1}, 
	\end{eqnarray*}
	which completes the proof of the expression for $g$. As for $n$, we have
	\begin{eqnarray*}
		n(T'') & = & n(T)+n(T')+g(T)g(T') \\
		& = & 	n_k^\flat+g_k^\flat(d-\delta^\flat_k) +\frac{(d-\delta^\flat_k)(d-\delta^\flat_k-1)}2+n_k^\flat+g_k^\flat(d'-\delta^\flat_k) +\frac{(d'-\delta^\flat_k)(d'-\delta^\flat_k-1)}2\\ 
		& + & (g_k^\flat+ d-\delta^\flat_k)( g_k^\flat+ d'-\delta^\flat_k ) \\
		& = & (2n_k^\flat+(g_k^\flat)^2) + 2g_k^\flat(d+d'-2\delta_k^\flat)\\
		& + & \left(\frac{(d-\delta^\flat_k)(d-\delta^\flat_k-1)}2+ \frac{(d'-\delta^\flat_k)(d'-\delta^\flat_k-1)}2 +(d-\delta^\flat_k)(d'-\delta^\flat_k))\right)
	\end{eqnarray*}
	The three summands of the last expression are equal to corresponding summands of the following 
	$$
	n(T'')= n_{k+1}^\flat +  g_{k+1}^\flat(d''-\delta_{k+1}^\flat) +  \frac{(d''-\delta^\flat_{k+1})(d''-\delta^\flat_{k+1}-1)}2,
	$$
	Our proof is complete.
\end{proof}

Since the expressions on $n(T)$ and $g(T)$ depend only on $d$, and the mapping $S\mapsto \alpha(S)$ is constant on each $\ka T_k$, we get the following statement.

\begin{corollary}
	\label{kong}
	Let $k\geqslant0$ and let $d$ be an integer such that $\delta^\flat_k\leqslant d\leqslant \delta^\sharp_k$. Then all the triples $T\in \ka T_k$ 
	satisfying the equation $d=\delta(T)$ are mutually congruent.
\end{corollary}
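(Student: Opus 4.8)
The plan is to unwind the definition of congruence and verify its requirements one at a time, noting that most of the work has already been done in the preceding propositions. Recall that two triples $T,T'$ are congruent precisely when they are concordant and satisfy $\beta(T)=\beta(T')$, $n(T)=n(T')$, and $g(T)=g(T')$. Since we are given $\delta(T)=\delta(T')=d$ for $T,T'\in\ka T_k$, the strategy is to show that each of these four conditions follows either from the way the sequence $\ka T_k$ was built or directly from the explicit formulas of Proposition \ref{wyrazenia}.

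First I would dispose of concordance, which is an invariant of the whole construction rather than something tied to equal $\delta$. The base set $\ka T_0=\ka T=\{T^\flat,T^\dag,T^\ddag,T^\sharp\}$ is pairwise concordant: each of the first components $A^\flat,A^\dag,A^\ddag,A^\sharp$ uses only the strings $00$, $01$, and $1*$, any two of which are at distance at most $1$, and all four triples have $\alpha=2$. Hence, by repeated application of Proposition \ref{mod}, every $\ka T_k$ is a set of pairwise concordant triples, so in particular any two $T,T'\in\ka T_k$ are concordant.

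Next I would record that $\alpha$ is constant on each level. Since $\alpha(T\otimes T')=\alpha(T)+1$ by Proposition \ref{numery}(1) and every triple of $\ka T_0$ has $\alpha=2$, an immediate induction gives $\alpha(S)=k+2=\alpha^\flat_k$ for every $S\in\ka T_k$. This is the decisive consequence that converts $\delta$-equality into $\beta$-equality: for $T,T'\in\ka T_k$ with $\delta(T)=\delta(T')=d$ we get $\beta(T)=d-\alpha(T)=d-(k+2)=\beta(T')$. The remaining two equalities come straight from Proposition \ref{wyrazenia}, which expresses $n(T)$ and $g(T)$ as functions of $d=\delta(T)$ and the level-dependent constants $n^\flat_k,g^\flat_k,\delta^\flat_k$ alone; hence $n(T)=n(T')$ and $g(T)=g(T')$. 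With all four defining conditions in place, $T$ and $T'$ are congruent.

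I do not expect a genuine obstacle: the corollary is essentially a bookkeeping assembly of facts already established, and its entire quantitative content was front-loaded into Proposition \ref{wyrazenia}. The only point that demands a little care is confirming that concordance is truly inherited at every step, which rests on correctly verifying the base case that $\ka T_0$ is pairwise concordant; once that is granted, Proposition \ref{mod} propagates it automatically up the sequence.
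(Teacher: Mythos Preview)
Your proposal is correct and matches the paper's own argument: the corollary is deduced immediately from the fact that $\alpha(\cdot)$ is constant on each $\ka T_k$ (so equal $\delta$ forces equal $\beta$) together with Proposition~\ref{wyrazenia}, which makes $n(T)$ and $g(T)$ depend only on $d$; concordance is inherited by construction via Proposition~\ref{mod}. There is nothing to add or change.
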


Theorem \ref{Theorem Main a(n)} is now an immediate consequence of Propositions \ref{wartosci}, \ref{na}, \ref{wyrazenia} and the fact that the family of ranges $I_k$, ($k\geqslant0$), is a partition of the range $\{4,5,6,\ldots\}$.  

\section{Further properties of sequences of triple sets}
\subsection{Elementary decompositions}

We proceed to examine the sequence $\ka T_k$, ($k\geqslant0$), where $\ka T=\{T^\flat, T^\dag, T^\ddag, T^\sharp\}$.

Let $L_k$ be any of the lists $[T^\varepsilon\in \ka T\colon \varepsilon \in \{0,1\}^k]$. Clearly, there are $4^{2^k}$ such lists.  Let $\varepsilon, \varepsilon' \in \{0,1\}^k$ differ only in the last place. Let us set $\varepsilon''=\varepsilon|\{1,2,\ldots, k-1\}$ and $T^{\varepsilon''}=T^{\varepsilon}\otimes T^{\varepsilon'}$. We can arrange all triples $T^{\varepsilon''}$ into a new list $L_{k-1}=[T^{\varepsilon}\in \ka T_{1}\colon \varepsilon\in \{0,1\}^{k-1}]$.  Continuing in this manner, we arrive to the list $L_1=[T^0, T^1]$,  whose elements belong to $\ka T_{k-1}$. Eventually, we end up with the triple $T=T^0\otimes T^1$ belonging to $\ka T_k$. Since $T$ is uniquely determined by $L_k$, we can adopt the following notation: $T=\bigotimes L_k$.  The list $L_k$ is said to be an \textit{elementary decomposition} of $T$.  

In the light of the definition of $\ka T_k$, the following proposition is obvious.
\begin{proposition}
	Let $\ka T_k$, ($k\geqslant0$), be as in Proposition \ref{na}. Then $T\in \ka T_k$, ($k\geqslant 1$), if and only if $T=\bigotimes L_k$ for some $L_k=[T^\varepsilon \in \ka T\colon \varepsilon \in \{0,1\}^k]$.  
\end{proposition}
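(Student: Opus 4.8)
The plan is to prove the two inclusions
$$
\ka T_k\subseteq\{\bigotimes L_k\}\quad\text{and}\quad \{\bigotimes L_k\}\subseteq\ka T_k
$$
simultaneously by induction on $k$, where $\{\bigotimes L_k\}$ abbreviates the collection of all triples of the form $\bigotimes L_k$ with $L_k=[T^\varepsilon\in\ka T\colon\varepsilon\in\{0,1\}^k]$. The base case $k=1$ is immediate: by definition $\ka T_1=\ka T\otimes\ka T=\{S\otimes T'\colon S,T'\in\ka T\}$, while $\bigotimes[T^0,T^1]=T^0\otimes T^1$ with $T^0,T^1\in\ka T$, so both sides coincide with this same set.

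The engine of the induction is the top-level recursion
$$
\bigotimes L_{k+1}=\Bigl(\bigotimes L_{k+1}|_0\Bigr)\otimes\Bigl(\bigotimes L_{k+1}|_1\Bigr),
$$
where $L_{k+1}|_b=[T^{b\varepsilon}\colon\varepsilon\in\{0,1\}^k]$ denotes the sublist of entries whose first index coordinate equals $b$, reindexed by $\{0,1\}^k$. Granting this identity, both inclusions at level $k+1$ fall out of the inductive hypothesis together with $\ka T_{k+1}=\ka T_k\otimes\ka T_k$. For necessity I would take $T\in\ka T_{k+1}$, write $T=S\otimes T'$ with $S,T'\in\ka T_k$, invoke the hypothesis to get $S=\bigotimes M$ and $T'=\bigotimes N$ for suitable lists $M,N$ of $2^k$ members of $\ka T$, and then form $L_{k+1}$ by setting $L_{k+1}|_0=M$ and $L_{k+1}|_1=N$; the recursion gives $\bigotimes L_{k+1}=S\otimes T'=T$. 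For sufficiency I would take $T=\bigotimes L_{k+1}$, use the recursion to write $T$ as the compound of $\bigotimes L_{k+1}|_0$ and $\bigotimes L_{k+1}|_1$, each of which lies in $\ka T_k$ by the hypothesis, whence $T\in\ka T_k\otimes\ka T_k=\ka T_{k+1}$.

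The only real work, and the step I expect to be the main obstacle, is justifying the top-level recursion, because the definition of $\bigotimes$ runs the other way: it repeatedly peels the \emph{last} index coordinate, pairing $T^{\eta0}$ with $T^{\eta1}$, whereas the recursion splits off the \emph{first} coordinate. I would prove the identity $\bigotimes L=(\bigotimes L|_0)\otimes(\bigotimes L|_1)$ for every list $L$ indexed by $\{0,1\}^m$ by a secondary induction on $m\geqslant1$, the base $m=1$ being immediate. In the inductive step one round of last-coordinate peeling turns $L$ into a list $M$ indexed by $\{0,1\}^{m-1}$ with entries $M[\eta]=T^{\eta0}\otimes T^{\eta1}$, and a direct check shows that the first-coordinate halves $M|_0,M|_1$ coincide with the lists obtained by peeling the last coordinate of $L|_0$ and of $L|_1$ respectively; since peeling does not change the value of $\bigotimes$, applying the secondary hypothesis to the shorter list $M$ yields
$$
\bigotimes L=\bigotimes M=(\bigotimes M|_0)\otimes(\bigotimes M|_1)=(\bigotimes L|_0)\otimes(\bigotimes L|_1).
$$
This is pure index bookkeeping — the recursion merely records that a balanced binary $\otimes$-tree may be read from the leaves up or from the root down — but it is exactly what makes the word ``obvious'' legitimate, so I would spell it out rather than leave it implicit.
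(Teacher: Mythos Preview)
Your proof is correct and matches the paper's approach --- the paper simply declares the proposition ``obvious'' in light of the definitions and gives no further argument, and your induction on $k$ is exactly the natural way to unpack that word.

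One remark: you work harder than necessary on the ``top-level recursion'' $\bigotimes L=(\bigotimes L|_0)\otimes(\bigotimes L|_1)$. This identity is not in tension with the definition; it \emph{is} the final step of the definition. Repeated peeling of the last coordinate leaves the first coordinate untouched until the very end, so the list $L_1=[T^0,T^1]$ produced by the definition is indexed precisely by the first coordinate of the original $\{0,1\}^k$, and $T^b$ is nothing but $\bigotimes L_k|_b$. Your secondary induction proves this cleanly, but once you see that ``peel last coordinate'' and ``restrict to first coordinate $=b$'' commute (which is the one-line ``direct check'' you mention), the identity is immediate without a separate induction. Either way, the argument is sound.
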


If $T=\bigotimes L_k$, then by Proposition \ref{numery} and an easy induction argument we get
$$
\beta(T)= 2^k-1 +\sum_{\varepsilon \in \{0,1\}^k} \beta(T^\varepsilon). 
$$
Let  us remind that 
$$
\beta(T^\flat)=2, \quad \beta(T^\dag)=  3, \quad \beta (T^\ddag)=4, \quad \text{and} \quad \beta (T^\sharp)=5.
$$
Let  $p$ be the number of occurences of $T^\flat$ in $L_k$, $q$ be the number of occurences of $T^\dag$, $r$ that of $T^\ddag$ and $s$ that of $T^\sharp$. Then 
$$
\beta(T)-2^k+1=2p+3q+4r+5s
$$
Let $d=\delta(T)$.  Since $T$ and $T_k^\flat$ being elements of $\ka T_k$ are concordant, $\alpha(T)=\alpha(T_k^\flat)=\alpha_k^\flat$. By the  definition of $\delta(T)$ and Proposition \ref{wartosci}, 
$$
d-2^k-k-1=2p+3q+4r+5s. 
$$
Therefore, we easily conclude with the following statement.

\begin{proposition}
	Let the elements of  a list $L$ belong to $\ka T=\{T^\flat, T^\dag, T^\ddag, T^\sharp\}$. Let $p$, $q$, $r$, $s$ be the numbers of occurrences of $T^\flat, T^\dag, T^\ddag, T^\sharp$ in $L$, respectively. If there is an integer $k \geqslant 0$ such that 
	$$
	p + q + r + s  =2^k,  \leqno{\rm (I)}
	$$
	and $d$ is defined by
	$$
	d= 2p + 3q + 4r + 5s +2^k+ k+1, \leqno{\rm (II)}
	$$
	then $L$ is  an elementary decomposition of some $T\in \ka T_k$ with $\delta(T)=d$.
\end{proposition}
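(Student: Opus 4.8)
The plan is to recognize that this proposition is essentially a repackaging of the preceding characterization of $\ka T_k$ in terms of elementary decompositions together with the displayed formula for $\beta(\bigotimes L_k)$. Accordingly, the proof reduces to checking two things: that condition (I) is exactly what allows $L$ to be regarded as an elementary decomposition, and that the resulting compound carries the $\delta$-value prescribed by (II).

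First I would exploit condition (I). Since $p+q+r+s$ is nothing but $|L|$, the hypothesis $p+q+r+s=2^k$ says precisely that $L$ has $2^k$ entries. Fixing any bijection between the positions of $L$ and the binary strings $\{0,1\}^k$, we may relabel the entries of $L$ as $[T^\varepsilon\in\ka T\colon\varepsilon\in\{0,1\}^k]$, so that $L$ becomes an elementary decomposition $L_k$. Here I would observe that $\bigotimes L_k$ is well defined: the four triples of $\ka T$ are pairwise concordant, their $A$-lists all consisting of strings drawn from $\{00,01,1*\}$, which have common length $2$ and pairwise distance at most $1$; hence every compound formed along the recursive collapse of $L_k$ is legitimate, and Proposition \ref{mod} guarantees that concordance propagates through all intermediate stages. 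By the preceding proposition characterizing $\ka T_k$, the triple $T:=\bigotimes L_k$ lies in $\ka T_k$, and $L$ is by construction an elementary decomposition of it.

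Second, I would compute $\delta(T)$ from the data $p,q,r,s,k$. The displayed formula gives $\beta(T)=2^k-1+\sum_{\varepsilon}\beta(T^\varepsilon)$, and substituting the tabulated values $\beta(T^\flat)=2$, $\beta(T^\dag)=3$, $\beta(T^\ddag)=4$, $\beta(T^\sharp)=5$ yields $\beta(T)=2^k-1+2p+3q+4r+5s$. Because $T$ and $T_k^\flat$ both belong to $\ka T_k$ they are concordant, so $\alpha(T)=\alpha_k^\flat=k+2$ by Proposition \ref{wartosci}. Adding, $\delta(T)=\alpha(T)+\beta(T)=2p+3q+4r+5s+2^k+k+1$, which is exactly $d$ by (II), completing the argument.

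The only genuinely delicate point, and the one I would flag, is the degenerate case $k=0$: then $|L|=1$, the preceding proposition was phrased for $k\geqslant1$, and $L$ is simply a one-element list $[T^\varepsilon]$ with $T^\varepsilon\in\ka T=\ka T_0$, so that $T=T^\varepsilon$ and the $\delta$-computation collapses to the base case already recorded in Proposition \ref{wartosci}. Beyond this book-keeping there is no real obstacle: the substance lies entirely in noticing that (I) is precisely the size condition making $L$ reindexable as an elementary decomposition, after which the numerics are supplied verbatim by Propositions \ref{numery} and \ref{wartosci}.
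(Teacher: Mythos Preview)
Your proposal is correct and follows essentially the same route as the paper: the proposition is stated immediately after the discussion deriving $d-2^k-k-1=2p+3q+4r+5s$ from the formula $\beta(\bigotimes L_k)=2^k-1+\sum_\varepsilon\beta(T^\varepsilon)$ together with $\alpha(T)=\alpha_k^\flat=k+2$, and the paper simply says ``Therefore, we easily conclude with the following statement.'' Your write-up makes explicit the reindexing step that turns condition (I) into the hypothesis of the preceding characterization of $\ka T_k$, and correctly flags the trivial $k=0$ case, but otherwise matches the paper's argument line for line.
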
 

Let us fix $k$, set $r=s=0$, and find all possible solutions of the system (I-II) with unknown non-negative integers $p, q, d$. It is easy to see that we can represent the solutions in such a way that $d$ and $p$ are variables dependent on $q$:
\begin{align*}
	p &=2^k-q\\
	d &=q+ 3\cdot 2^k+k+1.
\end{align*}
Clearly, $q$ can vary in the range $\{0,1,\ldots, 2^k\}$.  Therefore,  $d$ can take as values all numbers from the range $\{\rho, \rho+1, \ldots, \rho +2^k\}$, where $\rho= 3\cdot 2^k+k+1$, and only those numbers.  

Similarly, if $p=s=0$, then
$$
d=r+4\cdot 2^k+k+1.
$$
And the range of $d$ is $\{\rho', \rho'+1, \ldots, \rho' +2^k\}$, where $\rho'= 4\cdot 2^k+k+1$. 

Finally, if $p=q=0$, then 
$$
d=s+5\cdot 2^k+k+1. 
$$

Now, the range of $d$ is $\{\rho'', \rho''+1, \ldots, \rho'' +2^k\}$, where $\rho''= 5\cdot 2^k+k+1$.

Observe that the consecutive ranges $\{\rho, \rho+1, \ldots, \rho +2^k\}$, $\{\rho', \rho'+1, \ldots, \rho' +2^k\}$, and $\{\rho'', \rho''+1, \ldots, \rho'' +2^k\}$ are adjacent. Each pair of subsequent ranges has one element in common.

We summarize these observations in the following two propositions.

\begin{proposition} Let $d\geqslant 4$. Then there is a unique non-negative integer $k$ so that $d$ satisfies one  of the inequalities: 
	\begin{itemize}
		\item[\textrm{(a)}]  $\rho \leqslant d < \rho'$,  
		\item[\textrm{(a')}] $\rho'\leqslant d < \rho''$,
		\item[\textrm{(a'')}] $\rho''\leqslant d\leqslant \rho'''$,  
	\end{itemize}
	where $\rho=3\cdot 2^k+k+1$, $\rho'= \rho+2^k$, $\rho''=\rho'+2^k$, $\rho'''=\rho''+2^k$ .
	
	If {\rm (a)} holds true, then  $L=(\rho'-d)\cdot [T^\flat]+(d-\rho)[T^\dag]$ is an elementary decomposition of some $T\in \ka T_k$ such that $d=\delta(T)$.
	
	If {\rm (a')} holds true, then  $L=(\rho''-d)\cdot [T^\dag]+(d-\rho')[T^\ddag]$ is an elementary decomposition of some $T\in \ka T_k$ such that $d=\delta(T)$.
	
	If {\rm (a'')} holds true, then  $L=(\rho'''-d)\cdot [T^\ddag]+(d-\rho'')[T^\sharp]$ is an elementary decomposition of some $T\in \ka T_k$ such that $d=\delta(T)$.
\end{proposition}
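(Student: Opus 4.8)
The plan is to split the statement into two essentially independent claims and treat them separately: first, the existence and uniqueness of the index $k$ together with the selection of exactly one of the cases (a), (a'), (a''); and second, for each of the three cases, the verification that the exhibited list $L$ really is an elementary decomposition of some $T\in \ka T_k$ with $\delta(T)=d$.

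For the first claim I would argue purely from the arithmetic of the thresholds. By definition $\rho'=\rho+2^k$, $\rho''=\rho'+2^k$, and $\rho'''=\rho''+2^k$, so the three subranges appearing in (a), (a'), (a'') are $\{\rho,\ldots,\rho'-1\}$, $\{\rho',\ldots,\rho''-1\}$, $\{\rho'',\ldots,\rho'''\}$; the half-open/closed conventions remove the shared endpoints, so these three subranges partition the block $I_k=\{\rho,\ldots,\rho'''\}$. It then remains only to see that the blocks $I_k$ themselves partition $\{4,5,6,\ldots\}$ as $k$ ranges over the non-negative integers. This is precisely the observation recorded just before Proposition \ref{na}: since $\rho=\delta^\flat_k$, $\rho'''=\delta^\sharp_k$, and $\delta^\sharp_k+1=\delta^\flat_{k+1}$, the blocks are disjoint, adjacent, and cover everything from $\delta^\flat_0=4$ upward. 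Hence every $d\geqslant 4$ lies in a unique $I_k$ and satisfies exactly one of (a)--(a'').

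For the second claim I would invoke the preceding proposition, which guarantees that whenever a list $L$ with occurrence counts $p,q,r,s$ of $T^\flat,T^\dag,T^\ddag,T^\sharp$ satisfies
$$
p+q+r+s=2^k \quad\text{and}\quad d=2p+3q+4r+5s+2^k+k+1,
$$
then $L$ is an elementary decomposition of some $T\in \ka T_k$ with $\delta(T)=d$. In case (a) I would take $(p,q,r,s)=(\rho'-d,\;d-\rho,\;0,\;0)$; the inequalities $\rho\leqslant d<\rho'$ make these non-negative integers, and $p+q=\rho'-\rho=2^k$ gives condition (I). Substituting $\rho'=\rho+2^k$ and $\rho=3\cdot 2^k+k+1$ into $2p+3q+2^k+k+1$ collapses it to exactly $d$, giving condition (II). The cases (a') and (a'') are handled identically with $(p,q,r,s)$ replaced by $(0,\rho''-d,d-\rho',0)$ and $(0,0,\rho'''-d,d-\rho'')$ respectively; the analogous substitutions $\rho''=\rho'+2^k$, $\rho'''=\rho''+2^k$ again reduce (II) to the identity $d=d$.

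The argument is bookkeeping rather than conceptual, so I do not expect a genuine obstacle; the only points demanding care are at the boundaries. One must check that the coefficients $\rho'-d$ and $d-\rho$ (and their analogues) stay non-negative exactly on the half-open intervals defining (a)--(a''), and that the endpoints shared between consecutive subranges—where $L$ might formally be expressible using either of two starting triples—are assigned to a single case by the strict versus non-strict inequalities. Once the substitutions $\rho'=\rho+2^k$, etc., are made, verifying that (II) becomes an identity in each case is routine, and the constancy of $\alpha$ on $\ka T_k$ (established earlier) ensures the three decompositions all land in the same $\ka T_k$.
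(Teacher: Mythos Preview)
Your proposal is correct and follows exactly the approach of the paper: the proposition is stated there as a summary of the preceding computations, which amount to solving the system (I)--(II) with two of $p,q,r,s$ set to zero in each of the three cases, combined with the partition of $\{4,5,6,\ldots\}$ into the ranges $I_k=\{\delta^\flat_k,\ldots,\delta^\sharp_k\}$ already noted before Proposition~\ref{na}. Your bookkeeping for the non-negativity of the coefficients and the verification of condition (II) via the substitutions $\rho'=\rho+2^k$, etc., is precisely what the paper's discussion does in slightly less explicit form.
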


This proposition shows that for each $d\geqslant 4$ we may find a triple $T$ for which $d=\delta(T)$ and  whose elementary decomposition includes only two consecutive triples amongst $T^\flat$, $T^\dag$, $T^\ddag$, $T^\sharp$. 

According to (\ref{ind}), let us define two infinite sequences of triples, $T ^\dag_k$ and $T ^\ddag_k$, ($k\geqslant0$), whose initial triples are  $T^\dag_0=T^\dag$ and $T^\ddag_0=T^\ddag$, respectively. According to (\ref{iteracja}), let us define three sequences $\ka T^{l}_k$, $\ka T^{m}_k$, $\ka T^{t}_k$, ($k\geqslant0$), whose initial sets are as follows: $\ka T_0^l=\{T^\flat, T^\dag\}$, $\ka T_0^m=\{T^\dag, T^\ddag\}$,  $\ka T_0^t=\{T^\ddag, T^\sharp\}$. 

\begin{proposition}
	Let $\ka T_k$, ($k\geqslant0$), be as in Proposition \ref{na}. For every $k\geqslant0$, and every $T\in  \ka T_k$ there is $T'\in\ka T^l_k\cup \ka T^m_k\cup \ka T^t_k$ which is equivalent to $T$; that is $\delta(T)=\delta(T')$. Moreover,
	$$
	\ka T^l\cap \ka T^m=\{T^\dag\} \quad \text{and} \quad \ka T^m\cap \ka T^t=\{T^\ddag\}.
	$$ 
\end{proposition}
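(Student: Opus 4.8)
The plan is to prove this last proposition by combining the surjectivity statement of the previous proposition (the one exhibiting for each $d\geqslant 4$ an explicit two-triple elementary decomposition) with the structure of the three sequences $\ka T^l_k$, $\ka T^m_k$, $\ka T^t_k$. First I would fix $k\geqslant 0$ and take an arbitrary $T\in\ka T_k$; by the preceding proposition there is a unique classification of $d=\delta(T)$ into one of the three adjacent ranges (a), (a$'$), (a$''$), and in each case an explicit elementary decomposition $L$ of a triple in $\ka T_k$ with the same $\delta$-value that uses only two consecutive letters among $T^\flat, T^\dag, T^\ddag, T^\sharp$. The key observation is that a two-letter elementary decomposition built from $\{T^\flat,T^\dag\}$ (respectively $\{T^\dag,T^\ddag\}$, $\{T^\ddag,T^\sharp\}$) is, by the inductive $\otimes$-construction in (\ref{iteracja}), precisely an element of $\ka T^l_k$ (respectively $\ka T^m_k$, $\ka T^t_k$), since those sequences are generated by iterating $\otimes$ on exactly those two-element starting sets.

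Next I would make the connection to equivalence explicit. Given $T$, the previous proposition hands me a triple $T'$ lying in one of $\ka T^l_k$, $\ka T^m_k$, $\ka T^t_k$ with $\delta(T')=d=\delta(T)$; by definition this is exactly the required equivalence $\delta(T)=\delta(T')$. The only thing needing care is to verify that the two-letter decomposition really does land in the claimed sequence: I would argue by induction on $k$ that $\bigotimes L_k\in\ka T^l_k$ whenever every letter of $L_k$ lies in $\{T^\flat,T^\dag\}$, using that $\ka T^l_{k+1}=\ka T^l_k\otimes\ka T^l_k$ together with Proposition \ref{mod} (which guarantees the compounds stay concordant and hence the operation is well defined), and similarly for the other two sequences. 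This reduces everything already proved to a bookkeeping identification of the two-letter decompositions with membership in the appropriate $\ka T^{\bullet}_k$.

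For the final displayed intersection claim, I would compute the starting sets directly: $\ka T^l_0=\{T^\flat,T^\dag\}$ and $\ka T^m_0=\{T^\dag,T^\ddag\}$ share exactly $T^\dag$, while $\ka T^m_0=\{T^\dag,T^\ddag\}$ and $\ka T^t_0=\{T^\ddag,T^\sharp\}$ share exactly $T^\ddag$; since the proposition states these intersections at the level of the generating sets $\ka T^l,\ka T^m,\ka T^t$, this is an immediate inspection of the definitions, and the four triples $T^\flat,T^\dag,T^\ddag,T^\sharp$ are pairwise distinct (they have distinct $\beta$-values $2,3,4,5$).

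The main obstacle I anticipate is not conceptual but the careful matching between the ``elementary decomposition'' language of the previous proposition and the ``sequence of triple sets'' language of $\ka T^l_k,\ka T^m_k,\ka T^t_k$: one must check that an elementary decomposition using only two consecutive letters is genuinely produced by the $\otimes$-iteration that defines the restricted sequences, rather than merely having a matching $\delta$-value by accident. Once this identification is pinned down via the induction in the previous paragraph, the surjectivity onto each range and the adjacency of the ranges (established before this proposition) deliver the covering statement with essentially no additional computation.
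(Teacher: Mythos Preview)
Your approach is correct and is precisely the argument the paper leaves implicit: the proposition is stated in the paper without proof, as a summary of the preceding discussion, and your proposal supplies exactly the bookkeeping the reader is expected to carry out---namely, identifying the two-letter elementary decompositions from the previous proposition with elements of $\ka T^l_k$, $\ka T^m_k$, $\ka T^t_k$ via the inductive definition~(\ref{iteracja}), and then reading off the intersections at level~$0$ by inspection of the $\beta$-values.
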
 

\subsection{A distribution theorem}

Suppose that a finite set of concordant triples $\ka T$  and a positive integer  $k$ are given.  Let $\ka T_k$ be the $k$-th member of the sequence given by (\ref{iteracja})  and $T=(A,B,C)\in \ka T_k$. We will be dealing with the distribution of jokers $*$, or, essentially equivalently, the symbols $0,1$ in the strings belonging to $A\ominus B$.  

Let $L$ be a list of strings belonging to some $S^d$. For each string $w\in L$, let us take the total number of occurrences of the symbols $0$ or $1$ in this string. Let $\mu(L)$ and $M(L)$ be the minimum and the maximum of these numbers, respectively. 

For every triple $T=(A,B,C)$, we set $\mu(T)=\mu(A\ominus B)$, $M(T)=M(A\ominus B)$, $\kappa(T)=\mu(C)$ and  $K(T)=M(C)$. If $\ka S$ is a a non-empty finite set of triplets, then $\mu(\ka S)= \min\{\mu(T)\colon T \in \ka S\}$,   $M(\ka S)= \max\{M(T)\colon T \in \ka S\}$, $\kappa(\ka S)=\min\{\kappa(T)\colon T\in \ka S\}$ and $K(\ka S)= \max\{K(T)\colon T \in \ka S\}$. 

\begin{theorem}  Let $\ka T$ be a non-empty set of mutually concordant triples. Let the sequence $\ka T_k$, ($k\geqslant0$), be as defined in (\ref{iteracja}). Let $\mu_k=\mu(\ka T_k)$, $M_k=M(\ka T_k)$, $\kappa_0=\kappa(\ka T)$  and $K_0=K(\ka T)$. Then
	$$
	\min\{2\kappa_0 - 1,\mu_0\}+2k \leqslant \mu_k\leqslant M_k\leqslant \max\{2K_0-1, M_0\} +2k.
	$$
\end{theorem}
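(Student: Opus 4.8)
The plan is to reduce the theorem to a single-step analysis of how the four statistics $\mu$, $M$, $\kappa$, $K$ behave under the compound operation $\otimes$, and then to propagate the resulting recursions through the iteration (\ref{iteracja}). The middle inequality $\mu_k\leqslant M_k$ is immediate: since $\ka T_k$ is non-empty, pick any $T\in\ka T_k$; then $A\ominus B$ is non-empty, so $\mu_k\leqslant\mu(T)=\mu(A\ominus B)\leqslant M(A\ominus B)=M(T)\leqslant M_k$. Hence all the work lies in the two outer estimates.

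First I would read off, directly from the block description (\ref{agregat}) of $A''\ominus B''$, the number of non-joker symbols contributed by each of the three rows for $T''=T\otimes T'$. A string from the first row has the shape $0v0w*^{\beta(T')}$ with $vw$ an element of $A\ominus B$, so its non-joker count is $2$ plus the count of that element; thus the first row contributes counts in $[2+\mu(T),\,2+M(T)]$. Symmetrically the second row, whose strings are $0v'1*^{\beta(T)}w'$, contributes counts in $[2+\mu(T'),\,2+M(T')]$. A third-row string has the shape $1*^{\alpha(T)}*cc'$ with $c\in C$, $c'\in C'$, hence count $1+(\text{count of }c)+(\text{count of }c')$, giving the range $[1+\kappa(T)+\kappa(T'),\,1+K(T)+K(T')]$. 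Reading $C''$ from (\ref{konstrukcja}) in the same way (its two rows are $0c*^{\beta(T')}$ and $1*^{\beta(T)}c'$) yields the transformation of the $C$-statistics. Collecting these gives the one-step rules
\begin{align*}
\mu(T\otimes T')&=\min\{\,2+\mu(T),\,2+\mu(T'),\,1+\kappa(T)+\kappa(T')\,\},\\
M(T\otimes T')&=\max\{\,2+M(T),\,2+M(T'),\,1+K(T)+K(T')\,\},\\
\kappa(T\otimes T')&=1+\min\{\kappa(T),\kappa(T')\},\qquad K(T\otimes T')=1+\max\{K(T),K(T')\}.
\end{align*}

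Passing to the set level, and writing $\kappa_k=\kappa(\ka T_k)$, $K_k=K(\ka T_k)$, I take the extremum over all pairs $T,T'\in\ka T_k$. Since $T$ and $T'$ range independently, the $\kappa$- and $K$-rules give $\kappa_{k+1}=\kappa_k+1$ and $K_{k+1}=K_k+1$, whence $\kappa_k=\kappa_0+k$ and $K_k=K_0+k$. The $\mu$- and $M$-rules then give $\mu_{k+1}=\min\{\mu_k+2,\,2\kappa_0+2k+1\}$ and $M_{k+1}=\max\{M_k+2,\,2K_0+2k+1\}$. A short induction now closes the argument: assuming $\mu_k\geqslant\min\{2\kappa_0-1,\mu_0\}+2k$, both terms of the minimum defining $\mu_{k+1}$ are at least $\min\{2\kappa_0-1,\mu_0\}+2(k+1)$, using $\mu_k+2\geqslant(\min\{2\kappa_0-1,\mu_0\}+2k)+2$ for the first and $2\kappa_0+2k+1=(2\kappa_0-1)+2(k+1)\geqslant\min\{2\kappa_0-1,\mu_0\}+2(k+1)$ for the second; the base case $k=0$ is the tautology $\mu_0\geqslant\min\{2\kappa_0-1,\mu_0\}$. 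The upper bound for $M_k$ follows by the mirror-image induction with $\max$ replacing $\min$.

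I expect the only delicate point to be the accurate bookkeeping of non-joker symbols in the third row of (\ref{agregat}): one must notice that the repeated all-joker block $[*^{\alpha(T)}]\cdot|CC'|$ on the $A''$-side and the leading $[*]$ on the $B''$-side contribute nothing, so that the count is exactly $1$ (from the single symbol $1$) plus the counts inherited from $C$ and $C'$. This is precisely what forces the doubling of the $C$-statistics and hence the terms $2\kappa_0-1$ and $2K_0-1$. Once the three row-ranges are identified correctly, the passage to the set level and the two inductions are routine.
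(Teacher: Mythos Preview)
Your argument is correct and follows essentially the same approach as the paper: both read off the one-step transformation rules for $\mu$, $M$, $\kappa$, $K$ from the block description (\ref{konstrukcja})--(\ref{agregat}), first settle the behaviour of $\kappa_k$ and $K_k$, and then use these inside an induction for $\mu_k$ and $M_k$. Your write-up is in fact a bit sharper than the paper's, since you obtain the exact recursions $\kappa_{k+1}=\kappa_k+1$, $K_{k+1}=K_k+1$, $\mu_{k+1}=\min\{\mu_k+2,\,2\kappa_k+1\}$, $M_{k+1}=\max\{M_k+2,\,2K_k+1\}$ at the set level, whereas the paper only records the inequalities $\kappa_0+k\leqslant\kappa_k$ and $K_k\leqslant K_0+k$ needed for the final step.
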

\begin{proof}
Let us set $\kappa_k =\kappa(\ka T_k)$ and $K_k=K(\ka T_k)$. First we prove by induction that 
$$
\kappa_0+k\leqslant \kappa_k\leqslant K_k\leqslant K_0+k.
$$

Let $T''=(A'', B'', C'')$ be an element of $\ka T_k$, where $k\geqslant 1$. Then $T''=T\otimes T'$, where $T=(A,B,C)$ and  $T'=(A',B',C')$ are elements of $\ka  T_{k-1}$. According to (\ref{konstrukcja}), 
$$
\kappa(T'')=\min\{\kappa(T)+1, \kappa(T') +1\}.
$$ 
By the induction, both  $\kappa(T)+1$, $\kappa(T') +1$ are not smaller than $\kappa_0+k$. Therefore, $\kappa_k\geqslant \kappa_0+k$. The inequality for $K_k$ is proved using the same method, with some obvious modifications. 

Now, by (\ref{agregat}),  
$$
\mu(T'')=\min\{\mu(T)+2, \mu (T')+2, \kappa(T)+\kappa(T')+1\}.
$$ 
Therefore, by the preceding part and the induction hypothesis,
$$
\mu(T'')\geqslant \min\{ \min\{2\kappa_0 - 1,\mu_0\}+2k, (2\kappa_0 - 1)+2k\}\geqslant \min\{2\kappa_0 - 1,\mu_0\}+2k,
$$
which readily implies the left-hand side inequality. The right-had side is proved in a similar manner.
\end{proof}

In the case of our particular interest $\ka T=\{\ka T^\flat, \ka T^ \dag, \ka T^\ddag, \ka T^\sharp\}$, we have $\mu_0=2$, $M_0=5$, $\kappa_0=1$, $K_0=3$. Therefore, we have the following corollary.

\begin{corollary}
	If $\ka T= \{\ka T^\flat, \ka T^ \dag, \ka T^\ddag, \ka T^\sharp\}$, then for every $k\geqslant 1$,  every $T=(A,B,C) \in \ka T_k$ and every pair of strings $v,w\in A\ominus B$, the numbers of occurrences of $0$'s and $1$'s in the two strings differ by at most $4$.
\end{corollary}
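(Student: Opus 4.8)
The plan is to derive the corollary directly from the preceding theorem by specializing the four numerical parameters to the concrete generating set $\ka T=\{T^\flat, T^\dag, T^\ddag, T^\sharp\}$. First I would compute $\mu_0$, $M_0$, $\kappa_0$ and $K_0$ by inspecting the lists $A\ominus B$ and $C$ for each of the four starting triples. Since the four pairings $A^\flat\ominus B^\flat=F_4$, $A^\dag\ominus B^\dag=F_5$, $A^\ddag\ominus B^\ddag=F_6$ and $A^\sharp\ominus B^\sharp=F_7$ are written out explicitly, one reads off the minimum and maximum number of non-joker symbols ($0$'s and $1$'s) appearing in any of their strings. The extremal values over the whole set give $\mu_0=2$, $M_0=5$, $\kappa_0=1$ and $K_0=3$, exactly as stated in the sentence preceding the corollary.

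Next I would plug these values into the inequality furnished by the theorem, namely
$$
\min\{2\kappa_0-1,\mu_0\}+2k \leqslant \mu_k \leqslant M_k \leqslant \max\{2K_0-1,M_0\}+2k.
$$
With $\kappa_0=1$ we get $2\kappa_0-1=1$, so $\min\{1,2\}=1$ and the lower bound becomes $\mu_k\geqslant 1+2k$. With $K_0=3$ we get $2K_0-1=5$, so $\max\{5,5\}=5$ and the upper bound becomes $M_k\leqslant 5+2k$. For any $T=(A,B,C)\in\ka T_k$ and any string $w\in A\ominus B$, the number of non-joker symbols in $w$ lies between $\mu_k$ and $M_k$ by definition of these quantities. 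Hence for two strings $v,w\in A\ominus B$ the two counts both lie in the interval $\{1+2k,\ldots,5+2k\}$, whose length is $4$, so they differ by at most $(5+2k)-(1+2k)=4$.

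The argument is almost entirely a substitution, so the only real obstacle is the bookkeeping in the first step: one must be careful that $\mu$, $M$, $\kappa$, $K$ count \emph{non-joker} symbols, and that the extrema are taken over every string in the relevant list, then over all four triples. In particular $\kappa_0$ and $K_0$ refer to the $C$-components, not the pairings, so I would double-check that the shortest and longest non-joker patterns in $C^\flat, C^\dag, C^\ddag, C^\sharp$ indeed realize $1$ and $3$. Once these four base values are confirmed, the corollary follows immediately and uniformly in $k\geqslant 1$, with no further calculation required.
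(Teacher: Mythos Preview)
Your proposal is correct and follows exactly the paper's approach: the paper simply records the four base values $\mu_0=2$, $M_0=5$, $\kappa_0=1$, $K_0=3$ in the sentence immediately preceding the corollary and then states the corollary as an immediate consequence of the distribution theorem. Your substitution into the theorem's inequality, yielding $1+2k\leqslant\mu_k\leqslant M_k\leqslant 5+2k$ and hence a maximal spread of $4$, is precisely what the paper intends.
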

\subsection{Heat maps} 

If $T=(A,B,C)$ is a nice triple, then $A\ominus B$ is a 2-neighborly list of strings in $S^d$. The data contained in $A\ominus B $ can be encoded in an $n\times d$ array $M=[m_{ij}]$ where $n=|A\ominus B|$. We can create a \emph{heat map} for such $M$. Such a map is a rectangle dissected into $n\times d$ equal, colored boxes. We shall use three colors: red for $0$, grey for $*$ and  black for $1$. We include several heat maps related to the codes  $A\ominus B$, extracted from triples $T=(A,B,C)$ belonging  to the members of the sequence discussed in Proposition  \ref{na}.

\begin{figure}[h]
	
	\begin{subfigure}{0.45\textwidth}
		\includegraphics[width=1\linewidth, height=7cm]{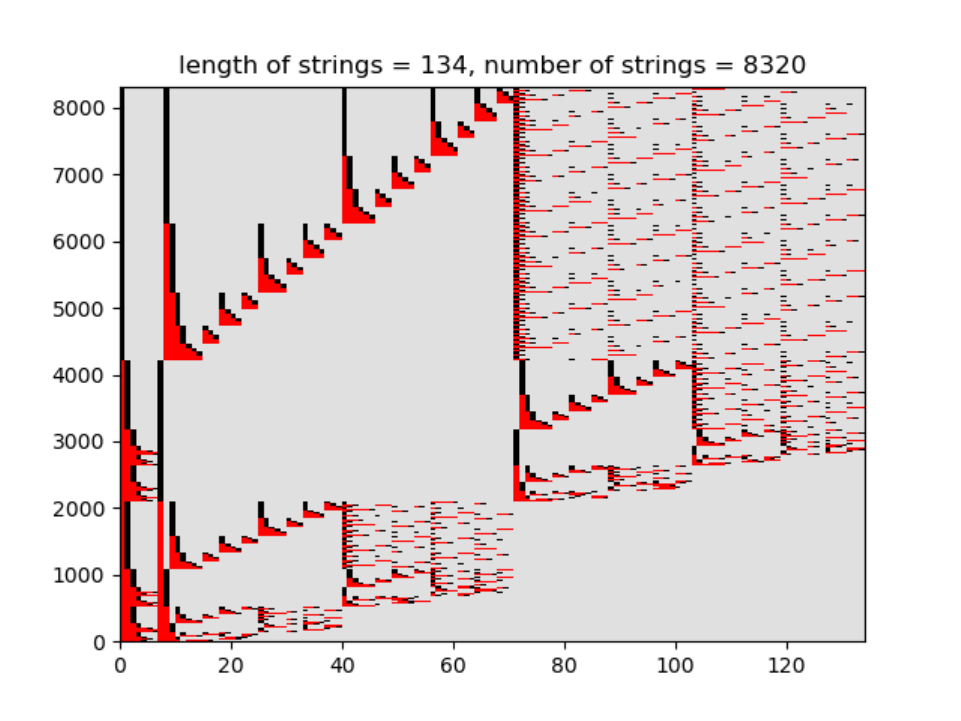}
		\caption{The heat map 1.}
		\label{Heat 1}
	\end{subfigure}
	\begin{subfigure}{0.45\textwidth}
		\includegraphics[width=1\linewidth, height=7cm]{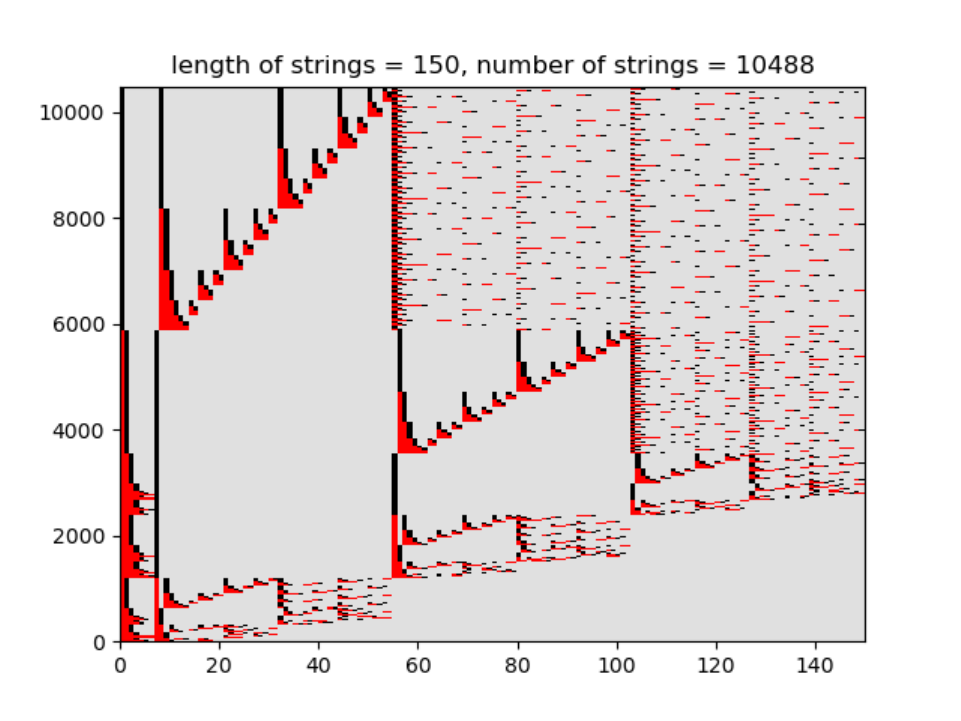}
	\caption{The heat map 2.}
	\label{Heat 2}
\end{subfigure}
		\begin{subfigure}{0.45\textwidth}
		\includegraphics[width=1\linewidth, height=7cm]{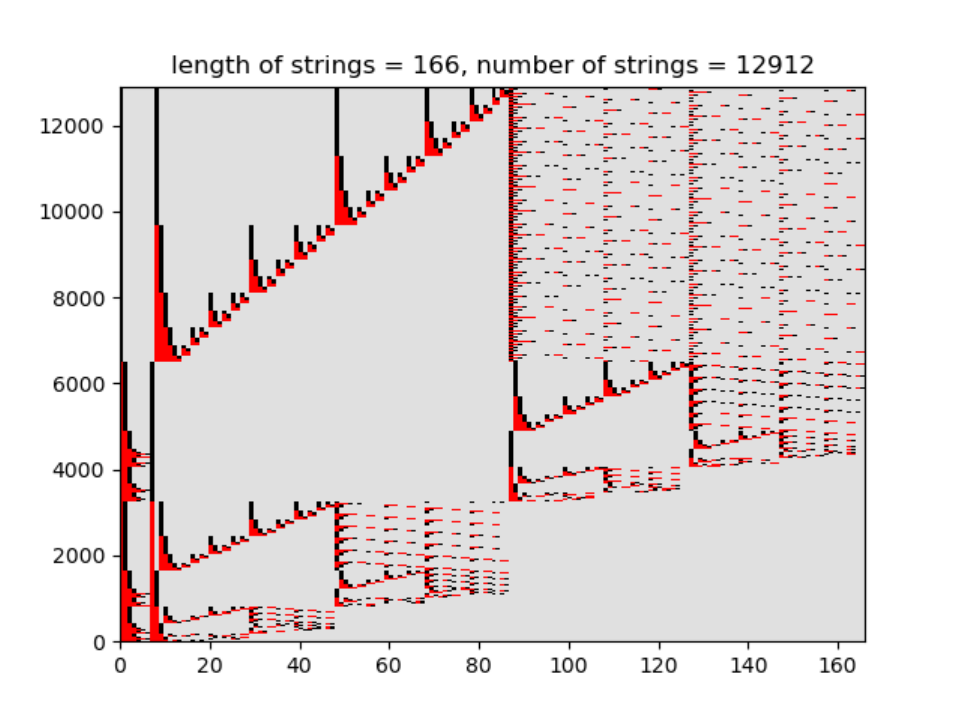}
		\caption{The heat map 3.}
		\label{Heat 3}
	\end{subfigure}
	\begin{subfigure}{0.45\textwidth}
		\includegraphics[width=1\linewidth, height=7cm]{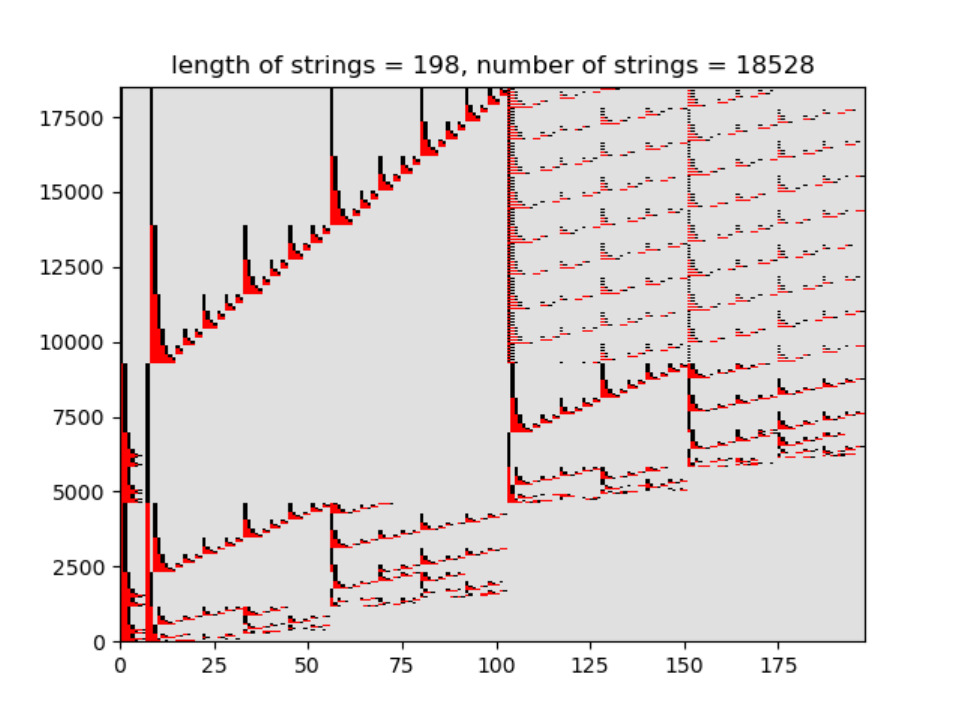}
		\caption{The heat map 4.}
		\label{Heat 4}
	\end{subfigure}
\caption{Heat maps.}
\end{figure}

\section{Final remarks}

Let us conclude with another open problem concerning a possible construction of optimal neighborly codes. It stems from Conjecture 1 in \cite{AlonGKP}, which is formulated below in a more dynamic way.

Let $v\in S^d$ be any string with at least one joker. Suppose that $v_i=*$. A \emph{splitting} of the string $v$ \emph{at position} $i$ is the pair of strings $v',v''$ obtained by replacing this joker with $0$ and $1$, respectively. For example, if $v=0**1$ and $i=3$, then $v'=0*01, v''=0*11$ is a splitting of $v$ at position $i=3$.

Consider now the following one-player game. Let $V$ be any $k$-neighborly code in $S^d$. Pick any string $v\in V$ with at least one joker. Then chose any joker in $v$ and make a splitting of $v$ at position it occupies. Now delete $v$ from the code $V$ and replace it with two strings $v',v''$ produced by the splitting. The move is \emph{legal} if the new set of strings is still a $k$-neighborly code. A natural question is:

\begin{center}
	\emph{How long one can play the splitting game starting with all-jokers string?}
\end{center}

Let us denote by $\score(k,d)$ the \emph{score} of the splitting game, i.e., the maximum size of a $k$-neighborly code obtained in the course of the splitting game in $S^d$. Clearly, $\score(k,d)\leqslant n(k,d)$, but is it possible that the most intelligent play always results in an optimal code?

Consider for example the case of $k=2$ and $d=3$ (see Figure \ref{Splitting}). After splitting the initial string $***$ at $i=1$ we get two strings, $0**$ and $1**$. Splitting the first one at $i=2$ gives $00*$ and $01*$, which together with $1**$ form a $1$-neighborly code. In the next three steps we get a $2$-neighborly code with six elements, which is best possible (since $n(2,3)=6$).
\begin{figure}[ht]
	\begin{center}
		
		\resizebox{12cm}{!}{
			
			\includegraphics{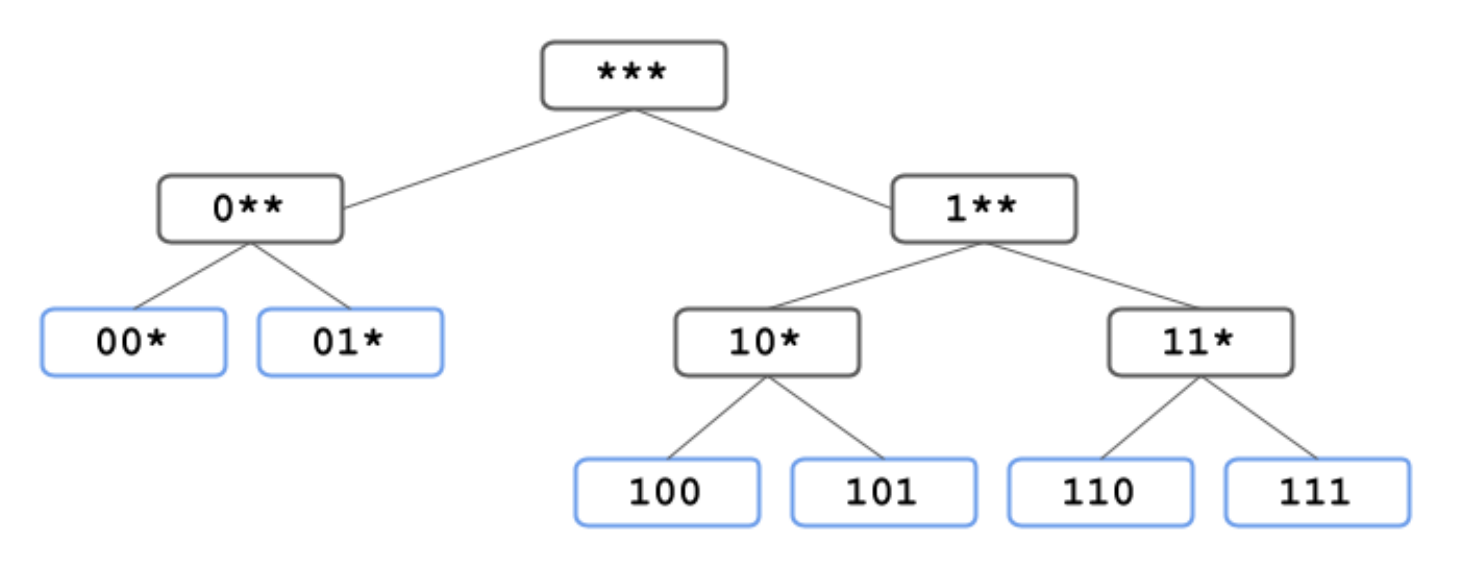}
			
		}
		\caption{The splitting game producing a maximal $2$-neighborly code (in blue boxes).}
		\label{Splitting}
	\end{center}
\end{figure}

We believe that the following conjecture is true.
\begin{conjecture}
	There exists a strategy in the splitting game resulting in a $k$-neighborly code of maximum size, that is, $\score(k,d)=n(k,d)$, for every $1\leqslant k\leqslant d$.
\end{conjecture}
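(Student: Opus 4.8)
The easy inequality $\score(k,d)\leqslant n(k,d)$ holds by definition, so the content lies in the reverse bound $\score(k,d)\geqslant n(k,d)$. The plan is first to pin down exactly which codes the game can produce. A splitting move fixes one joker of one string to $0$ and to $1$, so the codes reachable from $\ast^d$ are precisely the leaf sets of binary \emph{split trees}: rooted trees with root $\ast^d$ in which each internal node is split at one of its joker coordinates, one child receiving $0$ and the other $1$ there. For such a code the lower bound $\dist\geqslant 1$ is automatic, since any two leaves diverge at the split coordinate of their last common ancestor, where one carries $0$ and the other $1$; thus during play one never risks a distance dropping to $0$, and only the upper bound $\dist\leqslant k$ can fail.

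The second ingredient is a monotonicity lemma: turning a binary symbol into a joker can only decrease $\dist$. Since every internal ``frontier'' node of a split tree is a coarsening (in this sense) of the leaves below it, the distance between two frontier nodes is bounded above by the distance between any two leaves beneath them. Hence, if the leaf set of a split tree is $k$-neighborly, then every intermediate frontier produced while building the tree top-down is automatically $k$-neighborly, and the code can be realized by a legal play in any order compatible with the tree. Combining the two observations reformulates the conjecture as the static statement
$$
\score(k,d)=\max\{\,|V|: V\subseteq S^d \text{ is } k\text{-neighborly and is the leaf set of a split tree}\,\}.
$$
So $\score(k,d)=n(k,d)$ amounts to saying that some \emph{maximum} $k$-neighborly code admits a recursive bipartition in which, at each stage, one coordinate is joker-free across the current block and separates it into its two halves.

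For the small cases I would exhibit such optimal split-tree codes directly. When $k=1$, the ``caterpillar'' tree --- split $\ast^d$ at coordinate $1$, then repeatedly split the $0$-child at a fresh coordinate --- yields $d+1$ pairwise distance-one leaves, matching Zaks' bound, so the conjecture holds for $k=1$. For $k=2$ the matter is already delicate: the compound operation \eqref{konstrukcja} is recursive, but its block form contains a row led by the joker $[\ast]$, so the new leading coordinate is \emph{not} joker-free and $\otimes$ is not literally a root split. Consequently the $b(d)$ code built above is not automatically split-tree structured, and one would have to produce a different recursive bipartition of the same size (or verify by hand, as for $d=3$, that the game reaches $b(d)$). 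Deciding whether the $\otimes$-construction can be reorganized into split-tree form is, I expect, the decisive test case.

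The main obstacle is the general equality between the unrestricted maximum and the best split-tree code. Because splitting can only increase distances, a top-down strategy may be forced past $k$ before reaching full size; dually, in the reverse direction a maximum code may admit no legal \emph{merge}, since collapsing any sibling pair (if one even exists) can send some distance to $0$, so one cannot simply peel a maximum code back to $\ast^d$. A proof would therefore need an exchange argument guaranteeing that some maximum $k$-neighborly code possesses a joker-free separating coordinate and, recursively, a full split-tree structure. For general $k$ this is genuinely hard because no formula for $n(k,d)$ is known: there is no extremal code in hand to split, so the maximality would have to be argued abstractly to fall within the split-tree class rather than demonstrated on an explicit configuration.
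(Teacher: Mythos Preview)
This statement is a \emph{conjecture} in the paper, not a theorem; the paper offers no proof of it, only the example in Figure~\ref{Splitting} and the surrounding discussion. So there is nothing on the paper's side to compare against.

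Your submission is likewise not a proof, and you say so yourself: after the reformulation you explicitly write that ``a proof would therefore need an exchange argument'' and that for general $k$ ``this is genuinely hard.'' What you have produced is a strategy outline together with the easy case $k=1$. The pieces you do establish are correct: the characterization of game-reachable configurations as leaf sets of split trees is right, the monotonicity observation (replacing a digit by a joker cannot increase $\dist$) is right, and the consequence that every intermediate frontier of a split tree with $k$-neighborly leaves is itself $k$-neighborly is right. Your caterpillar argument for $k=1$ is also correct and matches $n(1,d)=d+1$.

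The genuine gap, which you identify, is exactly the content of the conjecture: showing that \emph{some} maximum $k$-neighborly code admits a full split-tree decomposition. You give no mechanism for producing a joker-free separating coordinate in an arbitrary extremal code, and your remarks about the $\otimes$-construction for $k=2$ correctly flag that the codes built in the paper are not manifestly of split-tree type (the third block of $B''$ in \eqref{konstrukcja} begins with a joker, so the $B''$-leading coordinate is not joker-free; one would have to argue via a different coordinate or a different extremal code). In short: your framing is sound and your partial results are valid, but the conjecture remains open in your write-up just as it does in the paper.
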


\end{document}